\numberwithin{equation}{section}
\newcommand{\bbm}{\begin{bmatrix}}
\newcommand{\ebm}{\end{bmatrix}}
\newcommand{\bv}{\begin{vmatrix}}
\newcommand{\ev}{\end{vmatrix}}
\newcommand{\g}{\mathfrak{g}}
\newcommand{\p}{\mathfrak{p}}
\newcommand{\n}{\mathfrak{n}}
\newcommand{\h}{\mathfrak{h}}
\newcommand{\lev}{\mathfrak{l}}
\newcommand{\z}{\mathfrak{z}}
\newcommand{\C}{\mathbb{C}}
\newcommand{\Z}{\mathbb{Z}}
\newcommand{\beq}{\begin{equation*}}
\newcommand{\eeq}{\end{equation*}}
\newcommand{\beqn}{\begin{eqnarray*}}
\newcommand{\eeqn}{\end{eqnarray*}}
\newcommand{\mf}{\mathfrak}
\newcommand{\mc}{\mathcal}
\newcommand{\bp}{\begin{pmatrix}}
\newcommand{\ep}{\end{pmatrix}}
\DeclareMathOperator{\ch}{ch}
\DeclareMathOperator{\Ann}{Ann}
\DeclareMathOperator{\Ind}{Ind}
\DeclareMathOperator{\res}{res}
\DeclareMathOperator{\rad}{rad}
\theoremstyle{plain}
\newtheorem{theorem}{Theorem}[section]
\theoremstyle{definition}
\newtheorem{definition}[theorem]{Definition}
\theoremstyle{plain}
\newtheorem{lemma}[theorem]{Lemma}
\theoremstyle{plain}
\newtheorem{proposition}[theorem]{Proposition}
\theoremstyle{remark}
\theoremstyle{plain}
\newtheorem{corollary}[theorem]{Corollary}
\theoremstyle{remark}
\newtheorem{remark}[theorem]{Remark}
\theoremstyle{definition}
\newtheorem{example}[theorem]{Example}
\theoremstyle{definition}
\newtheorem{notation}[theorem]{Notation}
\title{Contravariant forms on Whittaker modules}
\author{Adam Brown and Anna Romanov}
\date{}
\begin{document}

\maketitle

\begin{abstract}
    Let $\g$ be a complex semisimple Lie algebra. We give a classification of contravariant forms on the nondegenerate Whittaker $\g$-modules $Y(\chi, \eta)$ introduced by Kostant in \cite{Kostant}. We prove that the set of all contravariant forms on $Y(\chi, \eta)$ forms a vector space whose dimension is given by the cardinality of the Weyl group of $\mf{g}$. We also describe a procedure for parabolically inducing contravariant forms. As a corollary, we deduce the existence of the Shapovalov form on a Verma module, and provide a formula for the dimension of the space of contravariant forms on the degenerate Whittaker modules $M(\chi, \eta)$ introduced by McDowell in \cite{McDowell}.    
\end{abstract}

\section{Introduction}
\label{introduction}

This paper concerns a classical tool in the study of representations of Lie algebras: contravariant forms. Contravariant forms are certain symmetric bilinear forms on modules over a Lie algebra which are invariant under an antiautomorphism of the Lie algebra (Definition \ref{contravariant form}). Many well-studied classes of Lie algebra modules, such as Verma modules, finite-dimensional irreducible modules, and, more generally, highest weight modules in Bernstein--Gelfand--Gelfand's category $\mc{O}$, admit a unique contravariant form up to a multiplier. 
In this paper we study a class of Lie algebra modules for which this is not the case. 

Let $\g$ be a complex semisimple Lie algebra with a fixed Borel subalgebra $\mf{b} \subset \g$ and Cartan subalgebra $\h \subset \mf{b}$. Let $\n=[\mf{b},\mf{b}]$ be the nilpotent radical of $\mf{b}$, and $W$ the Weyl group of $\g$. Denote by $U(\g)$ the universal enveloping algebra of $\g$ and by $Z(\g)$ its center. In \cite{Kostant}, Kostant introduced a family of Whittaker $\g$-modules
\begin{equation*}
    \label{intro definition}
    Y(\chi, \eta):=U(\g)\otimes_{Z(\g)\otimes_\C U(\n)} \C_{\chi, \eta},
\end{equation*}
where $\C_{\chi, \eta}$ is the one-dimensional $Z(\g)\otimes_\C U(\n)$-module determined by the characters $\chi:Z(\g)\rightarrow \C$ and $\eta:\n\rightarrow \C$ (Definition \ref{nondegenerate Whittaker module}). Each module $Y(\chi, \eta)$ is cyclically generated by a Whittaker vector $w=1 \otimes 1 \in Y(\chi, \eta)$ on which $\n$ acts by a nondegenerate Lie algebra morphism $\eta:\n\rightarrow \C$ (Definition \ref{nondegenerate}). The modules $Y(\chi, \eta)$ are infinite-dimensional and irreducible. The main result of this paper is a classification of contravariant forms on $Y(\chi, \eta)$. 
\begin{theorem}
\label{intro theorem}(Theorem \ref{main theorem}) The set of contravariant forms on the Whittaker module $Y(\chi, \eta)$ is a finite-dimensional vector space whose dimension is given by the cardinality of the Weyl group of $\g$. 
\end{theorem}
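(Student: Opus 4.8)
The plan is to set up a linear correspondence between contravariant forms and a space of ``dual Whittaker functionals,'' and then compute the dimension of the latter. Fix the Chevalley antiautomorphism $\tau$ of $U(\g)$ that fixes $\h$ pointwise, interchanges the root spaces $\g_\alpha \leftrightarrow \g_{-\alpha}$ (so that $\tau(\n)=\bar{\n}$, the opposite nilradical), and restricts to the identity on $Z(\g)$. Given a contravariant form $B$ on $Y(\chi,\eta)$, I would consider the functional $f:=B(w,-)$. For $y\in\bar{\n}$ we have $\tau(y)\in\n$ and $\tau(y)w=\eta(\tau(y))w$, so $f(yu)=B(\tau(y)w,u)=\bar\eta(y)f(u)$, where $\bar\eta:=\eta\circ\tau|_{\bar{\n}}$; thus $f$ descends to a functional on the coinvariant space $Q:=Y(\chi,\eta)/(\bar{\n}-\bar\eta)Y(\chi,\eta)$. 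Since $w$ is cyclic and $B(Xw,u)=f(\tau(X)u)$, the assignment $B\mapsto f$ is injective. Conversely, I would reconstruct a form from any functional on $Q$ by setting $B(Xw,X'w):=f(\tau(X)X'w)$. The content here is well-definedness: the annihilator of $w$ is the left ideal $U(\g)\ker\chi+U(\g)(\n-\eta)$, and applying $\tau$ sends it into $\ker\chi\,U(\g)+(\bar{\n}-\bar\eta)U(\g)$; the first summand acts by zero on $Y(\chi,\eta)$ because $\ker\chi$ is central and kills the module, while the second is annihilated by $f$. This shows that the space of all contravariant bilinear forms (before imposing symmetry) is isomorphic to $Q^*$.

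Next I would compute $\dim Q=|W|$. Using Kostant's structure theory together with the PBW theorem, $Y(\eta):=U(\g)\otimes_{U(\n)}\C_\eta\cong U(\bar{\n})\otimes U(\h)$ as a vector space, with $\bar{\n}$ acting by left multiplication on the first factor and $Z(\g)$ acting through its Harish-Chandra image on the second. Reducing modulo $\ker\chi$ and invoking Chevalley's theorem that $S(\h)$ is free of rank $|W|$ over $S(\h)^W$, I obtain $Y(\chi,\eta)\cong U(\bar{\n})\otimes\big(U(\h)/\mf{m}_\chi U(\h)\big)$, a free $U(\bar{\n})$-module of rank $|W|$. Taking $\bar\eta$-twisted $\bar{\n}$-coinvariants of a free module leaves exactly the second factor, so $Q\cong U(\h)/\mf{m}_\chi U(\h)\cong\C^{|W|}$.

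The remaining, and most delicate, point is to show that every contravariant bilinear form arising this way is symmetric, so that the symmetric forms fill out all of $Q^*$ rather than a proper subspace. Symmetry of $B$ is equivalent to the identity $f\circ\tau=f$ on $Y(\chi,\eta)$, that is, to $\tau$ acting as the identity on $Q$. I expect this to be the crux of the argument, and I would prove it as follows: by the decomposition $U(\g)=U(\bar{\n})U(\h)U(\n)$ together with the relations defining $Q$ (the factor $U(\n)$ collapses through $\eta$, and left multiplication by $U(\bar{\n})$ collapses through $\bar\eta$), the natural map $U(\h)\to Q$ is surjective. Since $\tau$ restricts to the identity on the commutative algebra $U(\h)$, it must act trivially on the quotient $Q$. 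Combining the three steps, the space of contravariant forms is isomorphic to $Q^*$ and hence has dimension $|W|$. The main obstacles I anticipate are the careful verification of well-definedness in the reconstruction step and the surjectivity of $U(\h)\to Q$, since these are precisely the places where the nondegeneracy of $\eta$ and the interaction between $Z(\g)$ and $U(\h)$ must be used with care.
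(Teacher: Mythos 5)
Your steps (1) and (3) are correct, and they are in fact a module-theoretic repackaging of what the paper does: your coinvariant space $Q=Y(\chi,\eta)/(\overline{\n}-\bar\eta)Y(\chi,\eta)$ is canonically the paper's $Q=U(\h)/p_\eta(U(\g)\ker\chi)$ (via Proposition \ref{first Ug decomp}, the composite $U(\h)\hookrightarrow U(\g)\twoheadrightarrow Q$ is onto with kernel exactly $p_\eta(U(\g)\ker\chi)$); your reduction of contravariant forms to $Q^\ast$ matches Proposition \ref{forms-functionals} together with Lemma \ref{reduction to Q}; and your observation that symmetry comes for free because $U(\h)$ surjects onto $Q$ while $\tau$ fixes $U(\h)$ pointwise is precisely the mechanism behind the $\tau$-invariance of $\phi\circ p_\eta$ in the paper's proof of Lemma \ref{reduction to Q}. (Your unproved claim that $\tau$ fixes $Z(\g)$ pointwise is a standard fact, so no harm there.)

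The genuine gap is step (2), and it sits exactly where the technical heart of the paper lies. Under the PBW identification $Y(\eta):=U(\g)\otimes_{U(\n)}\C_\eta\cong U(\overline{\n})\otimes U(\h)$, the center does \emph{not} act ``through its Harish-Chandra image on the second factor''; it does not even preserve the second factor. Already in $\mf{sl}_2$ with $\eta(x)=1$, the Casimir $\Omega=\tfrac{1}{2}h^2+h+2yx$ sends $w=1\otimes 1$ to $\Omega w=(\tfrac{1}{2}h^2+h)w+2yw$, which has a component along $y\otimes 1$. What survives after passing to the coinvariants $Q$ is weaker: the subspace of $U(\h)$ one must quotient by is $p_\eta(U(\g)\ker\chi)$, where $p_\eta$ is the $\eta$-twisted projection of Definition \ref{twisted HC proj}, and this agrees with the Harish-Chandra picture only \emph{up to lower-degree corrections} --- e.g.\ $p_\eta(\Omega)=\tfrac{1}{2}h^2+h+2$ while $p_0(\Omega)=\tfrac{1}{2}h^2+h$, and the corrections occurring in $p_\eta(u(z-\chi(z)))$ depend on $u$. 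In particular $p_\eta(U(\g)\ker\chi)$ is \emph{not} the ideal $\mathfrak{m}_\chi U(\h)$ generated by the (possibly $\rho$-shifted) Harish-Chandra image of $\ker\chi$, so Chevalley's freeness of $S(\h)$ over $S(\h)^W$ computes the codimension of the wrong subspace and cannot be invoked off the shelf. Proving that these lower-degree discrepancies do not change the codimension --- that a $W$-stable complement $C$ of $\langle S(\h)^W_+\rangle$ remains a complement of $p_\eta(U(\g)\ker\chi)$, whence $\dim Q=|W|$ --- is exactly the content of Lemma \ref{key lemma} and Lemma \ref{complement decomposition}, which the paper proves by induction on polynomial degree; your proposal contains no substitute for this argument. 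Notably, the obstacles you flag at the end (well-definedness of the reconstruction, surjectivity of $U(\h)\to Q$) are the easy parts; the dimension count you treat as routine is the actual crux of the theorem.
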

To prove Theorem \ref{intro theorem}, we construct a vector space isomorphism between the space of contravariant forms on $Y(\chi, \eta)$ and the space of  Weyl group coinvariants in the symmetric algebra $S(\h)$. By classical results in invariant theory, the space of $W$-coinvariants in $S(\h)$ is isomorphic to the regular representation of $W$, so this isomorphism lets us conclude that the space of contravariant forms is $|W|$-dimensional.

The reason we can construct such an isomorphism has to do with the fact that the modules $Y(\chi, \eta)$ have an infinitesimal character; that is, the center $Z(\g)$ acts on $Y(\chi, \eta)$ by $\chi:Z(\g)\rightarrow \C$. Contravariant forms on cyclic $U(\g)$-modules are closely related to linear functionals on $U(\g)$ which vanish on the annihilator of a generating vector (Proposition \ref{forms-functionals}). The annihilator in $U(\g)$ of the generating Whittaker vector $w \in Y(\chi, \eta)$ is generated by $\ker{\eta}\subset U(\n)$ and $\ker\chi\subset Z(\g)$ (Proposition \ref{annihilator}). Hence to determine the dimension of the space of contravariant forms on $Y(\chi, \eta)$, it suffices to determine the dimension of a vector space complement in $U(\g)$ to the subspace spanned by $\Ann_{U(\g)}w$ and its image under the antiautomorphism of Definition \ref{contravariant form}  (Proposition \ref{first Ug decomp}, Lemma \ref{reduction to Q}). Computing this codimension reduces to determining a complement in $U(\h)$ to the image of the ideal generated by $\ker \chi$ under an $\eta$-twisted version of the Harish-Chandra homomorphism (Definition \ref{twisted HC proj}). The bulk of our argument in Section \ref{classification of contravariant forms} is dedicated to showing that this complement can be realized as the space of $W$-coinvariants.  

As a secondary result, we establish a procedure for parabolically inducing contravariant forms from nondegenerate Whittaker modules for a reductive subalgebra $\mf{l}\subset \g$ to degenerate Whittaker modules for $\g$. 
\begin{theorem}
\label{second theorem intro} (Theorem \ref{second result}) 
Let $\eta:\n\rightarrow \C$ be an Lie algebra morphism, and $\mf{l}_\eta=\overline{\n}_\eta \oplus \h \oplus \n_\eta \subset \g$ the corresponding reductive Lie subalgebra generated by the simple root spaces on which $\eta$ does not vanish. Let $\mf{p_\eta}$ be the standard parabolic subalgebra with Levi factor $\mf{l}_\eta \subset \p_\eta$. Let $V$ be an irreducible finitely generated $U(\mf{l}_\eta)$-module with the property that for each $v \in V$, there exists $k \in \Z_{>0}$ such that the $U(\n_\eta)$ action satisfies 
\[
(x-\eta(x))^kv=0
\]
for all $x \in \n_\eta$. Then vector space $\Psi_V$ of contravariant forms on the $\mf{l}_\eta$-module $V$ is isomorphic to the vector space $\Psi_{\Ind_{\mf{l}_\eta}^\g V}$ of contravariant forms on the parabolically induced $\g$-module $\Ind_{\mf{l}_\eta}^\g V:=U(\g)\otimes_{U(\mf{p}_\eta)}V$.  
\end{theorem}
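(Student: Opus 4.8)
The plan is to realize the isomorphism explicitly as a restriction map, with an inverse given by an induction formula built from a parabolic Harish--Chandra projection. Write $\g = \overline{\uu}_\eta \oplus \lev_\eta \oplus \uu_\eta$, where $\uu_\eta$ is the nilradical of $\p_\eta$ and $\overline{\uu}_\eta$ the nilradical of the opposite parabolic, so that the PBW theorem gives $\Ind_{\lev_\eta}^\g V \cong U(\overline{\uu}_\eta)\otimes_\C V$ as vector spaces and $\uu_\eta$ annihilates $1 \otimes V$. Let $\tau$ denote the antiautomorphism of Definition \ref{contravariant form}; the properties I will use are that $\tau$ fixes $\h$ pointwise, preserves $\lev_\eta$, and interchanges $\uu_\eta$ and $\overline{\uu}_\eta$. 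First I would record the linear restriction map $R \colon \Psi_{\Ind_{\lev_\eta}^\g V} \to \Psi_V$ sending a form to its restriction to $1 \otimes V \cong V$; this is well defined because $\tau$ preserves $\lev_\eta$, so contravariance on $\Ind_{\lev_\eta}^\g V$ specializes to contravariance on $V$.

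For injectivity of $R$ I would exploit an eigenspace decomposition. Choosing the grading element $H \in \z(\lev_\eta) \subset \h$ of the parabolic (so that $\mathrm{ad}(H)$ vanishes on $\lev_\eta$ and is strictly negative on $\overline{\uu}_\eta$), Schur's lemma applied to the irreducible finitely generated module $V$ shows that $H$ acts on $V$ by a scalar $\lambda(H)$. Hence $H$ acts semisimply on $\Ind_{\lev_\eta}^\g V$, its top eigenspace is exactly $M_0 := 1 \otimes V$, and since $\tau(H) = H$ any contravariant form pairs distinct $H$-eigenspaces trivially. Introducing the projection $q \colon U(\g)\to U(\lev_\eta)$ along the decomposition $U(\g) = U(\lev_\eta) \oplus \lp \overline{\uu}_\eta U(\g) + U(\g)\uu_\eta\rp$, a PBW computation shows that the $M_0$-component of $a \cdot (1\otimes v)$ equals $1\otimes q(a)v$. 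Writing a general element of $\Ind_{\lev_\eta}^\g V$ as $a(1\otimes v)$ with $a \in U(\overline{\uu}_\eta)$, and using contravariance together with eigenspace orthogonality, any form $B$ with $R(B)=0$ satisfies $B(a(1\otimes v), a'(1\otimes v')) = \beta(v, q(\tau(a)a')v')$ with $\beta = R(B) = 0$, so $B = 0$.

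For surjectivity I would construct an explicit inverse $I$. Given $\beta \in \Psi_V$, define $B$ on $\Ind_{\lev_\eta}^\g V$ by the formula $B(a(1\otimes v), b(1\otimes v')) := \beta\lp v, q(\tau(a)b)v'\rp$ for $a, b \in U(\g)$. The two identities that make this work are $q\circ\tau = \tau\circ q$ (immediate, since $\tau$ preserves both summands of the defining decomposition of $U(\g)$) and the multiplicativity $q(cp) = q(c)q(p)$ for $p \in U(\p_\eta)$ together with its left-handed analogue for $U(\overline{\p}_\eta)$. Using these, along with the contravariance and symmetry of $\beta$ and the fact that $p \in U(\p_\eta)$ acts on $V$ by $q(p)$, one checks that the formula descends to the balanced tensor product $U(\g)\otimes_{U(\p_\eta)} V$ in each variable, so $B$ is a well-defined bilinear form; that $B$ is symmetric; and that $B$ is contravariant, the last being immediate from $\tau(xa) = \tau(a)\tau(x)$. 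Since $B$ manifestly restricts to $\beta$ on $1\otimes V$, we get $R\circ I = \mathrm{id}$, so $R$ is surjective, and combined with injectivity $R$ is the desired isomorphism.

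The main obstacle I anticipate is the well-definedness and contravariance of the induced form $B$, which rest entirely on the projection identities $q\circ \tau = \tau\circ q$ and $q(cp) = q(c)q(p)$ for $p \in U(\p_\eta)$; verifying the latter requires a careful PBW reordering argument, using that $[\uu_\eta, \lev_\eta]\subseteq \uu_\eta$ preserves the $\uu_\eta$-degree, so that only the scalar part of the $\uu_\eta$-component of a product contributes to $q$. By contrast, the injectivity argument via eigenspace orthogonality is comparatively routine once the scalar action of $\z(\lev_\eta)$ on $V$ has been established.
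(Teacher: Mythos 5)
Your proposal is correct, and at its core it uses the same two ingredients as the paper's proof of Theorem \ref{second result}: the projection $q$ (the paper's $\pi_\eta$) along the decomposition $U(\g)=U(\lev_\eta)\oplus(\overline{\n}^\eta U(\g)+U(\g)\n^\eta)$ of equation (\ref{Ul decomp}) (your $\uu_\eta, \overline{\uu}_\eta$ are the paper's $\n^\eta, \overline{\n}^\eta$), and the fact that $1\otimes V$ is the top piece of a semisimple decomposition of $\Ind_{\lev_\eta}^\g V$ under the center of the Levi, so that $\overline{\n}^\eta U(\g)(1\otimes V)$ meets it trivially (the paper's equation (\ref{no overlap})). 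The execution, however, differs genuinely at three points. First, the paper reduces to $V\cong Y(\chi,\eta)$ via the classification of irreducibles in $\mc{N}(\lev_\eta)_\eta$ and then converts forms into $\tau$-invariant functionals on $U(\g)$ vanishing on $\Ann_{U(\g)}\overline{w}$ (Proposition \ref{forms-functionals}, which needs cyclicity); you work directly with bilinear forms on an arbitrary irreducible $V$ and need neither reduction. Second, where the paper proves (\ref{no overlap}) by citing McDowell's results on the semisimple action of $\z=\rad\lev_\eta$ and the partial order on its weights, your grading element $H$ gives a self-contained substitute; the one caution is that ``Schur's lemma'' must here be Dixmier's version (irreducible $U(\lev_\eta)$-modules are typically infinite dimensional), or one can instead invoke the local $Z(\lev_\eta)$-finiteness built into the definition of $\mc{N}(\lev_\eta)_\eta$. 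Third, because your inverse is the explicit form-level formula $B(a\otimes v,b\otimes v')=\beta(v,q(\tau(a)b)v')$, descent to the balanced tensor product genuinely requires the multiplicativity identities $q(cp)=q(c)q(p)$ for $p\in U(\p_\eta)$ and the analogous left-handed identity for $\tau(U(\p_\eta))=U(\lev_\eta\oplus\overline{\n}^\eta)$; these do hold, exactly as you sketch, the key point being $\n^\eta U(\p_\eta)\subseteq U(\p_\eta)\n^\eta$ since $\n^\eta$ is an ideal of $\p_\eta$, but they are lemmas the paper never needs, because its inverse $\phi\mapsto\phi\circ\pi_\eta$ acts on functionals and only requires $\pi_\eta(\Ann_{U(\g)}\overline{w})\subseteq\Ann_{U(\lev_\eta)}w$. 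The trade-off: your route is more self-contained and produces the induced form by an explicit Shapovalov-type formula, while the paper's is shorter because it outsources the structure theory to Kostant, Mili\v{c}i\'{c}--Soergel, and McDowell.
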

If $\eta=0$, we have $\mf{l}_\eta=\h$ and $\mf{p}_\eta=\mf{b}$. Then for $\lambda \in \h^*$, an application of Theorem \ref{second theorem intro} to the one-dimensional $U(\h)$-module $\C_\lambda$ implies that the space of contravariant forms on the Verma module 
\[
M(\lambda):=U(\g) \otimes_{U(\mf{b})} \C_\lambda 
\]
is one-dimensional. In particular, this implies the existence and uniqueness (up to scaling) of the Shapovalov form on a Verma module \cite{Shapovalov}. 

For partially degenerate $\eta$, an application of Theorem \ref{second theorem intro} to the $\mf{l}_\eta$-module $Y(\chi, \eta)$ implies that the dimension of the space of contravariant forms on the Whittaker module $M(\chi, \eta)$ (equation (\ref{McDowell})) introduced by McDowell in \cite{McDowell} is given by the cardinality of the Weyl group of $\mf{l}_\eta$. 

This paper is organized in the following way. In Section \ref{preliminaries and notation}, we establish our conventions and definitions. In Section \ref{classification of contravariant forms}, we prove Theorem \ref{intro theorem}. In Section \ref{induction of contravariant forms} we prove Theorem \ref{second theorem intro} and two corollaries. In Section \ref{example} we provide a detailed $\mf{sl}_2(\C)$ example to illustrate the main arguments of Section \ref{classification of contravariant forms} more explicitly. 

\section*{Acknowledgements}
We would like to thank Peter Trapa for useful discussions, and Dragan Mili\v{c}i\'{c} and Arun Ram for valuable feedback on the structure of the paper. The first author acknowledges the support of the European Union’s Horizon 2020 research and innovation programme under the Marie Skłodowska-Curie Grant Agreement No. 754411. The second author is supported by the National Science Foundation Award No. 1803059.


\section{Preliminaries and notation}
\label{preliminaries and notation}

Let $\mf{g}$ be a complex semisimple Lie algebra. Fix a Cartan subalgebra $\mf{h}$ contained in a Borel subalgebra $\mf{b}$. Let $\Pi \subset \Sigma^+ \subset \Sigma\subset \h^*$ be the corresponding sets of simple and positive roots in the root system of $\g$. Let $W$ be the Weyl group associated to this root system, and let $\rho=\frac{1}{2}\sum_{\alpha \in \Sigma^+} \alpha$. For $\alpha \in \Sigma$, let $\g_\alpha = \{ x \in \g \mid [h,x]=\alpha(h)x\}$ be the $\alpha$-root space of $\g$, and choose a Chevalley basis $\{y_\alpha, x_\alpha\}_{\alpha \in \Sigma^+} \cup \{h_\alpha\}_{\alpha \in \Pi}$ of $\g$ with $x_\alpha \in \g_\alpha$, $y_\alpha \in \g_{-\alpha}$ and $h_\alpha \in \h$ such that $[x_\alpha, y_\alpha]=h_\alpha$. 
Let $\mf{n}=[\mf{b},\mf{b}]= \bigoplus_{\alpha \in \Sigma^+} \g_\alpha$ be the nilpotent radical of $\mf{b}$, and $\overline{\mf{n}}=\bigoplus_{\alpha \in \Sigma^+}\g_{-\alpha}$. 

For a Lie algebra $\mf{a}$, we denote by $U(\mf{a})$ the universal enveloping algebra of $\mf{a}$ with center $Z(\mf{a})\subset U(\mf{a})$. We call an algebra homomorphism $\chi:Z(\mf{a})\rightarrow \C$ an {\em infinitesimal character.}  

We use the symbol $\eta$ to refer to Lie algebra morphisms $\eta:\mf{n}\rightarrow \C$. Such a Lie algebra morphism $\eta:\n\rightarrow \C$ can be extended to an algebra homomorphism $\eta:U(\mf{n})\rightarrow \C$ which we will call by the same name. Any Lie algebra morphism $\eta:\mf{n}\rightarrow \C$ determines a subset of simple roots
\[
\Pi_\eta:=\{\alpha \in \Pi \mid \eta|_{\g_\alpha} \neq 0 \}.
\]
\begin{definition}
\label{nondegenerate}
We say that a Lie algbra morphism $\eta:\mf{n}\rightarrow \C$ is {\em nondegenerate} if $\Pi_\eta = \Pi$. 
\end{definition}

Fix a nondegenerate Lie algebra morphism $\eta:\mf{n}\rightarrow \C$ and an infinitesimal character $\chi:Z(\g)\rightarrow \C$. Let $\C_{\chi, \eta}$ be the one-dimensional $Z(\g)\otimes_\C U(\n)$-module defined by 
\[
z \otimes x \cdot v = \chi(z)\eta(x)v
\]
for $z \in Z(\g)$, $x \in U(\n)$, and $v \in \C$.
Kostant introduced the following class of $U(\g)$-modules in \cite{Kostant}. 
\begin{definition}
\label{nondegenerate Whittaker module}
Let 
\[
Y(\chi, \eta):=U(\g)\otimes_{Z(\g)\otimes_\C U(\n)} \C_{\chi, \eta},
\]
be the $U(\g)$-module given by left multiplication.  
\end{definition}
The modules $Y(\chi, \eta)$ are generated by the vector $w=1 \otimes 1$. The nilpotent radical acts on $w$ by $\eta$; that is, $$x \cdot w = \eta(x)w$$ for all $x \in \n$. 

In a $U(\g)$-module $V$, a {\em Whittaker vector} is a vector $v \in V$ with the property that for all $x \in \mf{n}$, $x \cdot v=\eta(x)v$ for some Lie algebra morphism $\eta:\mf{n}\rightarrow \C$. A $U(\g)$-module which is cyclically generated by a Whittaker vector is called a {\em Whittaker module}. Hence the modules $Y(\chi, \eta)$ are {\em nondegenerate Whittaker modules}.

Kostant showed that the modules $Y(\chi, \eta)$ are irreducible \cite[Thm. 3.6.1]{Kostant}.

\section{Classification of contravariant forms on nondegenerate Whittaker modules}
\label{classification of contravariant forms}
In this section we classify contravariant forms on the nondegenerate Whittaker modules $Y(\chi, \eta)$ introduced in Section \ref{preliminaries and notation}. We show in Theorem \ref{main theorem} that the set of contravariant forms on $Y(\chi, \eta)$ is a finite-dimensional vector space whose dimension is given by the cardinality of the Weyl group of $\mf{g}$.

\begin{definition}
\label{contravariant form}
Let $\tau:U(\g)\rightarrow U(\g)$ be the antiautomorphism defined by $\tau(x_\alpha)=y_{\alpha}$, $\tau(y_\alpha)=x_\alpha$ and $\tau(h_\alpha)=h_\alpha$. A \emph{contravariant form} on a $U(\g)$-module $X$ is a symmetric bilinear form $\langle\cdot , \cdot \rangle:X \times X \rightarrow \C$ such that 
$$
\langle uv,w\rangle =\langle v,\tau(u)w\rangle
$$
for all $u\in U(\g)$ and $v,w\in X$.
\end{definition}
Contravariant forms on $U(\g)$-modules are closely related to $\tau$-invariant linear functionals on $U(\g)$. In fact, we can reformulate the classification of contravariant forms on a cyclic $U(\g)$-module to the classification of $\tau$-invariant linear functionals on $U(\g)$ which vanish on the annihilator of a generating vector of the module.  
\begin{proposition}
\label{forms-functionals}
The vector space of contravariant forms on a cyclic $U(\g)$-module $X=U(\g)v$ is isomorphic to the vector space of linear functionals $\varphi:U(\g)\rightarrow \mathbb{C}$ satisfying the following conditions:
\begin{enumerate}[(a)]
\item $\varphi\left(\text{Ann}_{U(\g)}v\right)=0$, and 
\item $\varphi(u)=\varphi(\tau(u))$ for all $u \in U(\g)$.
\end{enumerate}
\end{proposition}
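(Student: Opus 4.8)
The plan is to exhibit an explicit pair of mutually inverse linear maps between the two vector spaces. Given a contravariant form $\langle\cdot,\cdot\rangle$ on $X=U(\g)v$, I would associate the functional $\varphi(u):=\langle uv,v\rangle$. Conversely, given a functional $\varphi$ satisfying (a) and (b), I would define a form on $X$ by the formula $\langle uv,u'v\rangle:=\varphi(\tau(u')u)$. The bulk of the work is checking that these assignments are well defined and that they are genuinely inverse to one another.

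In the forward direction both conditions are immediate: condition (a) holds because $u\in\Ann_{U(\g)}v$ forces $uv=0$ and hence $\varphi(u)=\langle 0,v\rangle=0$, while condition (b) follows from contravariance and symmetry, since $\varphi(u)=\langle uv,v\rangle=\langle v,\tau(u)v\rangle=\langle \tau(u)v,v\rangle=\varphi(\tau(u))$. Linearity in the form is clear. I would also record the identity $\langle uv,u'v\rangle=\varphi(\tau(u')u)$, obtained by moving $u'$ across the form via symmetry and contravariance; this shows that the original form is recovered from $\varphi$, and therefore that the forward map is injective.

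For the reverse direction, the key point — and the step I expect to be the main obstacle — is well-definedness of $\langle uv,u'v\rangle:=\varphi(\tau(u')u)$, i.e.\ independence of the choice of representatives $u,u'$ of the vectors $uv,u'v\in X$. Here I would use that $\Ann_{U(\g)}v$ is a \emph{left} ideal. Independence in the first argument is then direct: if $u-\tilde u\in\Ann_{U(\g)}v$, then $\tau(u')(u-\tilde u)$ again lies in the left ideal $\Ann_{U(\g)}v$, so (a) gives $\varphi(\tau(u')(u-\tilde u))=0$. Independence in the second argument is more delicate and is where condition (b) becomes essential: writing $\tau(u'-\tilde u')u=\tau(\tau(u)(u'-\tilde u'))$ via the antiautomorphism property $\tau(ab)=\tau(b)\tau(a)$ together with $\tau^2=\mathrm{id}$, and noting that $\tau(u)(u'-\tilde u')\in\Ann_{U(\g)}v$ because the annihilator is a left ideal, condition (b) followed by condition (a) yields $\varphi(\tau(u'-\tilde u')u)=\varphi(\tau(u)(u'-\tilde u'))=0$.

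Finally I would verify the remaining structural properties of the reconstructed form. Symmetry follows from (b) together with $\tau^2=\mathrm{id}$, via $\varphi(\tau(u')u)=\varphi(\tau(\tau(u')u))=\varphi(\tau(u)u')$. Contravariance is a formal computation using $(au)v=a\cdot(uv)$ and $\tau(\tau(a)u')=\tau(u')a$, giving $\langle a\cdot uv,u'v\rangle=\varphi(\tau(u')au)=\langle uv,\tau(a)u'v\rangle$. Since the two constructions are manifestly inverse to one another — $\varphi(u)=\langle uv,v\rangle$ corresponds to the form $\langle uv,u'v\rangle=\varphi(\tau(u')u)$ and back — this establishes the claimed isomorphism of vector spaces.
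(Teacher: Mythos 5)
Your proof is correct and takes essentially the same approach as the paper: the same pair of mutually inverse maps $\langle\cdot,\cdot\rangle \mapsto \varphi(u)=\langle uv,v\rangle$ and $\varphi\mapsto \langle uv,u'v\rangle:=\varphi(\tau(u')u)$, with well-definedness established exactly as in the paper by combining the left-ideal property of $\Ann_{U(\g)}v$ with conditions (a) and (b). Your write-up is, if anything, slightly more detailed, since you separate the two arguments of the form and spell out symmetry, contravariance, and the inverse property explicitly.
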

\begin{proof}
Given a contravariant form $\langle\cdot ,\cdot \rangle :X \times X \rightarrow \mathbb{C} $, define $\varphi:U(\g) \rightarrow \mathbb{C} $ by
\[
\varphi(u)=\langle uv, v\rangle.
\]
The linear functional $\varphi$ satisfies conditions (a) and (b). Conversely, given $\varphi:U(\g) \rightarrow \mathbb{C}$ satisfying (a) and (b), define a bilinear form on $X$ by 
\begin{equation}
    \label{form}
\langle x, y\rangle =\langle uv, u'v\rangle =\varphi(\tau(u')u)
\end{equation}
for $x=uv, y=u'v \in X$. It is straightforward to check that this form is symmetric, bilinear, and contravariant. However, the choices of $u,u'\in U(\g)$ such that $x=uv$ and $y=u'v$ are not always unique, so it is not immediately apparent that the form is well-defined. However, if $x=tv$ and $y=t'v$, with $t  \in U(\g)$ and $t'  \in U(\g)$, then
\begin{align*}
\langle uv, u'v\rangle - \langle tv, t'v \rangle 
&=\langle (u-t)v, u'v \rangle + \langle tv, (u'-t')v \rangle \\
&=\varphi(\tau(u')(u-t)) + \varphi (\tau(t)(u'-t')) \\
&= 0,
\end{align*}
 so equation (\ref{form}) does indeed define a contravariant form. Here the second equality follows from condition (b) and the third equality follows from (a), since $u-t$ and $u'-t'$ are in the annihilator of $v$. 
\end{proof}
\begin{notation}
\label{Phi}
For a $U(\mf{g})$-module $X$, denote by $\Psi_X$ the vector space of contravariant forms on $X$. If $X$ is a cyclic $U(\g)$-module with generating vector $v$, denote by $\Phi_X$ the vector space of $\tau$-invariant linear functionals on $U(\mf{g})$ which vanish on $\Ann_{U(\mf{g})}v$. By Proposition \ref{forms-functionals}, for a cyclic $U(\g)$-module $X$, $\Psi_X \simeq \Phi_X$. 
\end{notation}
Now we restrict our attention to the cyclic $U(\mf{g})$-module $Y(\chi, \eta)$ with generating vector $w=1 \otimes 1$ (Definition \ref{nondegenerate Whittaker module}). By Proposition \ref{forms-functionals}, to study the vector space of contravariant forms on $Y(\chi, \eta)$, we need to understand the annihilator of $w$. Kostant described this annihilator in \cite{Kostant}. Recall that any Lie algebra morphism $\eta:\mf{n}\rightarrow \C$ can be extended to an algebra homomorphism $\eta:U(\mf{n})\rightarrow \C$. More precisely, on a Poincar\'{e}--Birkhoff--Witt basis $\{x_{\alpha_1}^{k_1} \cdots x_{\alpha_n}^{k_n} \mid k_i \in \Z_{\geq 0} \}$ of $U(\mf{n})$, we define 
\[
\eta(x_{\alpha_1}^{k_1} \cdots x_{\alpha_n}^{k_n}):=\eta(x_{\alpha_1})^{k_1}\cdots \eta(x_{\alpha_n})^{k_n} \text{ and for }c \in \C, \eta(c):=c. 
\]Let $\ker{\eta}\subset U(\mf{n})$ refer to the kernel of the extended map $\eta:U(\mf{n})\rightarrow \C$.  
\begin{proposition}\cite[Thm. 3.1]{Kostant} 
\label{annihilator} Fix a nondegenerate Lie algebra morphism $\eta:\mf{n}\rightarrow \C$ and infinitesimal character $\chi:Z(\g)\rightarrow \C$. Let $w=1 \otimes 1$ be the generating Whittaker vector in $Y(\chi, \eta)$. Then 
\[
\Ann_{U(\mf{g})}w = U(\mf{g})\ker{\eta} + U(\mf{g}) \ker{\chi}.
\]
\end{proposition}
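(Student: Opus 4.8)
The plan is to realize $Y(\chi,\eta)$ as an honest quotient of $U(\g)$ and then read off the annihilator of $w$ directly, rather than analyzing the module structure basis-by-basis. Write $R:=Z(\g)\otimes_\C U(\n)$ for the algebra over which the induction is performed, and let $m\colon R\rightarrow U(\g)$, $z\otimes x\mapsto zx$, be the multiplication map; this is an algebra homomorphism because $Z(\g)$ is central. Through $m$ the enveloping algebra $U(\g)$ becomes a $(U(\g),R)$-bimodule, and $\C_{\chi,\eta}$ is the left $R$-module on which $R$ acts through the character $\chi\otimes\eta\colon R\rightarrow\C$ (this is multiplicative even though $U(\n)$ is noncommutative, since the target $\C$ is commutative). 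The first step is the identification $\C_{\chi,\eta}\cong R/J$ of left $R$-modules, where $J:=\ker(\chi\otimes\eta)$ is the codimension-one two-sided ideal of $R$.

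Next I would apply base change to the short exact sequence $0\to J\to R\to R/J\to 0$ of left $R$-modules. Because $U(\g)\otimes_R(-)$ is right exact and $U(\g)\otimes_R R\cong U(\g)$ as left $U(\g)$-modules via $u\otimes 1\mapsto u$, this gives a $U(\g)$-module isomorphism $Y(\chi,\eta)\cong U(\g)/U(\g)\,m(J)$ carrying the generator $w=1\otimes 1$ to the class of $1$; hence $\Ann_{U(\g)}w=U(\g)\,m(J)$. The proposition then reduces to the algebraic identity $U(\g)\,m(J)=U(\g)\ker\eta+U(\g)\ker\chi$. For this I would use the decomposition $J=(\ker\chi)\otimes U(\n)+Z(\g)\otimes(\ker\eta)$ valid for a product character on a tensor product of algebras, so that $m(J)=(\ker\chi)U(\n)+Z(\g)(\ker\eta)$. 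Multiplying on the left by $U(\g)$ and using centrality of $Z(\g)$: the term $U(\g)Z(\g)(\ker\eta)$ collapses to $U(\g)\ker\eta$ since $1\in Z(\g)$, while in $U(\g)(\ker\chi)U(\n)$ one slides the central factor in $\ker\chi$ past $U(\n)$ to land in $U(\g)\ker\chi$, the reverse containment being immediate on taking the $U(\n)$-factor to be $1$.

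The argument is largely formal, and I expect the only genuine inputs to be right-exactness of base change --- which sidesteps any need for flatness of $U(\g)$ over $R$ or for injectivity of $m$ --- together with the centrality of $Z(\g)$. The main point requiring care is the nontrivial inclusion $\Ann_{U(\g)}w\subseteq U(\g)\ker\eta+U(\g)\ker\chi$: the opposite inclusion is elementary, since $\ker\eta$ and $\ker\chi$ already act by zero on $w$, whereas containment of the \emph{entire} annihilator is precisely what the isomorphism $Y(\chi,\eta)\cong U(\g)/U(\g)\,m(J)$ delivers. A more hands-on alternative would fix a Poincar\'e--Birkhoff--Witt-type vector space decomposition of $U(\g)$ relative to the subalgebra generated by $Z(\g)$ and $U(\n)$ and track the resulting basis of $Y(\chi,\eta)$; there the obstacle would be making this decomposition explicit (for instance via Kostant's freeness of $U(\g)$ over $Z(\g)$), which the base-change route avoids.
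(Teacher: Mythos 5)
Your proof is correct, and it takes a genuinely different route from the paper: the paper gives no argument for this proposition at all, citing it as Kostant's Theorem 3.1, whereas you derive it formally from the paper's own definition of $Y(\chi,\eta)$ as an induced module. Writing $R=Z(\g)\otimes_\C U(\n)$ as you do, each step checks out: the multiplication map $m\colon R\rightarrow U(\g)$ is an algebra homomorphism precisely because $Z(\g)$ is central, and it is exactly the right $R$-module structure on $U(\g)$ implicit in the paper's definition; $\C_{\chi,\eta}\cong R/J$ with $J=\ker(\chi\otimes\eta)$; right-exactness of $U(\g)\otimes_R(-)$ identifies $Y(\chi,\eta)$ with $U(\g)/U(\g)m(J)$, carrying $w$ to the class of $1$, so $\Ann_{U(\g)}w=U(\g)m(J)$; and the splitting $J=\ker\chi\otimes U(\n)+Z(\g)\otimes\ker\eta$, together with centrality of $\ker\chi\subset Z(\g)$, collapses $U(\g)m(J)$ to $U(\g)\ker\eta+U(\g)\ker\chi$. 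What your route buys: it is self-contained, avoids all Poincar\'{e}--Birkhoff--Witt bookkeeping, needs neither injectivity of $m$ nor flatness of $U(\g)$ over $R$, and never invokes nondegeneracy of $\eta$, so the equality holds for arbitrary $\eta$. The one caveat worth flagging is that an argument this soft cannot show the left ideal $U(\g)\ker\eta+U(\g)\ker\chi$ is \emph{proper}, i.e.\ that $Y(\chi,\eta)\neq 0$; if the ideal were all of $U(\g)$, the stated equality would hold vacuously. That nonvanishing (and irreducibility, which the paper cites separately as Kostant's Thm.~3.6.1) is where the genuine content of Kostant's work lies and is what the paper's later dimension count ultimately relies on --- but it is not part of the proposition as stated, so your proof is complete for the statement at hand.
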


Linear functionals in $\Phi_{Y(\chi, \eta)}$ (Notation \ref{Phi}) must vanish on $\Ann_{U(\mf{g})}w+\tau(\Ann_{U(\mf{g})}w)$. Accordingly, to determine the dimension of $\Phi_{Y(\chi, \eta)}$, it will be helpful to determine a vector space complement to $\Ann_{U(\mf{g})}w + \tau(\Ann_{U(\mf{g})}w)$ in $U(\mf{g})$. The following proposition is a first step. 

\begin{proposition}
\label{first Ug decomp} Let $\eta:\mf{n}\rightarrow \C$ be a Lie algebra morphism. There is a direct sum decomposition
\begin{equation}
    \label{twisted HC hom}
    U(\g)=U(\h)\oplus (\tau(U(\mf{g})\ker{\eta})+U(\g)\ker{\eta}).
\end{equation}
\end{proposition}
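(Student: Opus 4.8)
The plan is to reduce the entire statement to the Poincar\'{e}--Birkhoff--Witt (PBW) theorem applied to the triangular decomposition $\g=\overline{\n}\oplus\h\oplus\n$, which gives a vector space isomorphism $U(\g)\cong U(\overline{\n})\otimes U(\h)\otimes U(\n)$ induced by the multiplication map $a\otimes b\otimes c\mapsto abc$. The one-dimensional data driving the argument comes from the observation that the extended map $\eta:U(\n)\to\C$ is a \emph{surjective} algebra homomorphism (it sends $1$ to $1$), so $\ker\eta$ is a two-sided ideal of codimension one and we have a splitting $U(\n)=\C\oplus\ker\eta$.

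First I would record the behaviour of $\tau$ on the three factors. Since $\tau(x_\alpha)=y_\alpha$, $\tau(y_\alpha)=x_\alpha$ and $\tau(h_\alpha)=h_\alpha$, the antiautomorphism $\tau$ restricts to an anti-isomorphism $U(\n)\xrightarrow{\sim}U(\overline{\n})$, fixes $U(\h)$ setwise, and squares to the identity. Setting $\overline{\eta}:=\eta\circ\tau|_{U(\overline{\n})}:U(\overline{\n})\to\C$, commutativity of $\C$ shows that $\overline{\eta}$ is again a surjective algebra homomorphism, and $\tau^2=\mathrm{id}$ gives $\ker\overline{\eta}=\tau(\ker\eta)$. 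Hence $U(\overline{\n})=\C\oplus\ker\overline{\eta}$, a codimension-one splitting mirroring the one for $U(\n)$.

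Next I would rewrite the two subspaces appearing in the sum in ordered PBW form. Because $\ker\eta$ is a two-sided ideal containing no constants but closed under left multiplication by $U(\n)$, we have $U(\n)\ker\eta=\ker\eta$; combining this with $U(\g)=U(\overline{\n})U(\h)U(\n)$ yields $U(\g)\ker\eta=U(\overline{\n})U(\h)\ker\eta$, which under the PBW isomorphism corresponds exactly to the subspace $U(\overline{\n})\otimes U(\h)\otimes\ker\eta$. Applying $\tau$ and using that it reverses products, $\tau\bigl(U(\g)\ker\eta\bigr)=\ker\overline{\eta}\cdot U(\h)\cdot U(\n)$, which is already in PBW order and so corresponds to $\ker\overline{\eta}\otimes U(\h)\otimes U(\n)$.

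Finally I would expand. Substituting $U(\overline{\n})=\C\oplus\ker\overline{\eta}$ and $U(\n)=\C\oplus\ker\eta$ into the PBW isomorphism splits $U(\g)$ as a direct sum of four subspaces, indexed by choosing $\C$ or the kernel in each outer factor. The summand $\C\otimes U(\h)\otimes\C$ is precisely $U(\h)$; the subspace $U(\g)\ker\eta$ is the sum of the two summands having $\ker\eta$ in the last factor; and $\tau(U(\g)\ker\eta)$ is the sum of the two summands having $\ker\overline{\eta}$ in the first factor. Their union is exactly the three summands other than $\C\otimes U(\h)\otimes\C$, so $\tau(U(\g)\ker\eta)+U(\g)\ker\eta$ is the complementary direct summand to $U(\h)$, which is the assertion of $(\ref{twisted HC hom})$. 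I expect the only delicate point to be the bookkeeping that ordered products such as $U(\overline{\n})U(\h)\ker\eta$ genuinely \emph{equal} (not merely sit inside) the corresponding tensor factors under PBW; once the four-fold direct sum is correctly identified, directness of the decomposition is immediate, since $U(\h)$ and the sum $\tau(U(\g)\ker\eta)+U(\g)\ker\eta$ are spanned by disjoint collections of PBW summands.
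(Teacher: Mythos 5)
Your proof is correct, and it takes a genuinely more structural route than the paper's. The paper works monomial by monomial: it takes a Poincar\'{e}--Birkhoff--Witt basis element $y^{\overline{I}}h^Jx^K$ and, by adding and subtracting the scalars $\eta(x^K)$ and $\eta(x^I)$, rewrites it explicitly as a term in $U(\g)\ker\eta + \tau(U(\g)\ker\eta)$ plus a term in $U(\h)$ (this is equation (\ref{box formula})); the directness of the sum is then simply asserted. You instead identify each subspace in the statement as a sum of tensor-factor summands under the PBW isomorphism $U(\overline{\n})\otimes U(\h)\otimes U(\n)\cong U(\g)$, using the codimension-one splittings $U(\n)=\C\oplus\ker\eta$ and $U(\overline{\n})=\C\oplus\tau(\ker\eta)$ together with the two-sided ideal properties $U(\n)\ker\eta=\ker\eta$ and $\tau(\ker\eta)\,U(\overline{\n})=\tau(\ker\eta)$; all of these claims are justified correctly. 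What your approach buys is a genuine proof of directness: both spanning and trivial intersection fall out at once from the four-fold tensor decomposition, whereas the paper's proof leaves the intersection claim unargued, so on that point yours is more complete. What the paper's computation buys in exchange is the explicit monomial formula (\ref{box formula}) for the projection $p_\eta$, which is reused later (in the proofs of Lemma \ref{commutant} and Lemma \ref{key lemma}) to compute $p_\eta$ on elements of $U(\g)_0$ and on the center $Z(\g)$; your structural argument establishes the proposition but one would still need to carry out that computation downstream.
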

\begin{proof}
Choose an order on the set of roots so that 
\[
\{y^{\overline{I}}h^Jx^K:=y_{\alpha_n}^{i_n} \cdots y_{\alpha_1}^{i_1} h_{\alpha_1}^{j_1} \cdots h_{\alpha_r}^{j_r} x_{\alpha_1}^{k_1}\cdots x_{\alpha_n}^{k_n} \mid i_l, j_l, k_l \in \Z_{\geq 0} \}
\]
forms a Poincar\'{e}--Birkhoff--Witt basis of $U(\mf{g})$. Here $I=(i_1, \ldots, i_n)$, $J=(j_1, \ldots, j_r)$ and $K=(k_1, \ldots, k_n)$ are multi-indices, $\overline{I}=(i_n, \ldots, i_1)$, and $y=(y_{\alpha_n}, \ldots, y_{\alpha_1})$, $h=(h_{\alpha_1}, \ldots, h_{\alpha_r})$,  $x=(x_{\alpha_1}, \ldots, x_{\alpha_n})$. Then we can write $y^{\overline{I}}h^Jx^K$ in the following way:
\begin{align}
y^{\overline{I}}h^Jx^K &= y^{\overline{I}}h^J\left(x^K-\eta(x^K)\right) + \eta(x^K)y^{\overline{I}}h^J \nonumber \\
&= y^{\overline{I}}h^J\left(x^K-\eta(x^K)\right) +\eta(x^K)\tau(h^J x^I) \nonumber\\
&= y^{\overline{I}}h^J\left(x^K-\eta(x^K)\right) +\eta(x^K)\tau\left(h^J \left(x^I - \eta(x^I)\right)+\eta(x^I)h^J\right) \nonumber\\
&=y^{\overline{I}}h^J\left(x^K-\eta(x^K)\right) +\eta(x^K)\tau\left(h^J \left(x^I - \eta(x^I)\right)\right)+\eta(x^K)\eta(x^I)h^J \nonumber\\
\label{box formula}
&= \boxed{y^{\overline{I}}h^J\left(x^K-\eta(x^K)\right) +\tau\left(\eta(x^K)h^J \left(x^I - \eta(x^I)\right)\right)}+ \boxed{\eta(x^K)\eta(x^I)h^J.} 
\end{align}
The first box is in $U(\mf{g})\ker{\eta} + \tau(U(\mf{g})\ker{\eta})$ and the second box is in $U(\mf{h})$. By extending linearly, we can write any vector of $U(\g)$ as a sum of a vector in $U(\mf{g})\ker{\eta} + \tau(U(\mf{g})\ker{\eta})$ and a vector in $U(\mf{h})$. The intersection $U(\mf{h}) \cap (U(\mf{g})\ker{\eta} + \tau(U(\mf{g})\ker{\eta}))=0$, so the sum is direct.
\end{proof}
\begin{definition}
\label{twisted HC proj}
Let 
\[
p_\eta:U(\g)=U(\h)\oplus (\tau(U(\mf{g})\ker{\eta})+U(\g)\ker{\eta})\rightarrow U(\h)
\]
be projection onto the first coordinate. We refer to $p_\eta$ as the {\em $\eta$-twisted Harish-Chandra projection}. 
\end{definition}
\begin{remark}
\label{HC hom} 
 Let 
\begin{equation}
    \label{commutant definition}
    U(\g)_0=\{x \in U(\g) \mid [h, x]=0 \text{ for all } h \in \h\}.
\end{equation}
Note that if $\eta=0$, the restriction of the projection $p_0$ in Definition \ref{twisted HC proj} to $U(\g)_0$ is exactly the Harish-Chandra homomorphism \cite[Ch. VIII, \S6.4]{Bourbaki7-9}. This justifies our choice of terminology in Definition \ref{twisted HC proj}.
\end{remark}
\begin{example}
Let $\mf{g}=\mf{sl}_2(\C)$, and let 
\[
y=\bp 0 & 0 \\ 1 & 0 \ep, h=\bp 1 & 0 \\ 0 & -1 \ep, x= \bp 0 & 1 \\ 0 & 0 \ep
\]
be the standard basis. The universal enveloping algebra $U(\g)$ has a basis consisting of monomials $\{y^ih^jx^k \mid i,j,k \in \Z_{\geq 0} \}$. Let $\eta:\mf{n}\rightarrow \C$ be the Lie algebra morphism sending $x \mapsto 1$. We can express the vector $y \in U(\g)$ as 
\[
y=\tau(x)=\tau(x-1)+1=\tau(x-\eta(x))+\eta(x), 
\]
so $p_\eta(y)=\eta(x)$. 
\end{example}
For the remainder of this section, we fix a nondegenerate Lie algebra morphism $\eta:\mf{n}\rightarrow \C$. Recall that our goal is to determine the dimension of the space $\Phi_{Y(\chi, \eta)}$ (Notation \ref{Phi}). Because $\tau(U(\g)\ker{\eta}) + U(\g)\ker\eta \subset \tau(\Ann_{U(\g)}w) + \Ann_{U(\g)}w$, any linear functional $\varphi \in \Phi_{Y(\chi, \eta)}$ must vanish on $\tau(U(\g)\ker{\eta}) + U(\g)\ker\eta$. Hence for any $\varphi \in \Phi_{Y(\chi, \eta)}$ and $u \in U(\g)$, 
\begin{equation}
    \label{vanish on N}
\varphi(u)=\varphi(p_\eta(u)).
\end{equation}
Moreover, by Kostant's description of the annihilator in Proposition \ref{annihilator}, $\varphi$ must also vanish on $U(\g)\ker\chi$, so for $v \in U(\g)\ker\chi$, 
\begin{equation}
    \label{vanish on J}
    \varphi(v)=\varphi(p_\eta(v))=0 
\end{equation}
 as well. Conversely, the following lemma shows that any linear functional in $U(\g)^*$ satisfying (\ref{vanish on N}) and (\ref{vanish on J}) is in $\Phi_{Y(\chi, \eta)}$. 
\begin{lemma}
\label{reduction to Q}
Let $Q=U(\h)/p_\eta(U(\g)\ker\chi)$. As vector spaces,
$$
\Phi_{Y(\chi, \eta)} \cong Q^\ast.
$$
\end{lemma}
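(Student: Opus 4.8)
The plan is to realize the claimed isomorphism concretely, using the $\eta$-twisted Harish--Chandra projection $p_\eta$ of Definition~\ref{twisted HC proj} to transport functionals between $U(\g)$ and $U(\h)$. Write $\pi:U(\h)\to Q$ for the canonical quotient map. I would define the forward map $\Phi_{Y(\chi,\eta)}\to Q^\ast$ by sending $\varphi$ to the functional induced on $Q$ by the restriction $\varphi|_{U(\h)}$, and the backward map $Q^\ast\to\Phi_{Y(\chi,\eta)}$ by sending $\bar\psi$ to $\bar\psi\circ\pi\circ p_\eta$. The bulk of the work is checking that each map lands in the asserted target and that the two are mutually inverse.

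For the forward map, I first note that any $\varphi\in\Phi_{Y(\chi,\eta)}$ satisfies (\ref{vanish on N}) and (\ref{vanish on J}), as established before the lemma. Condition (\ref{vanish on J}) says precisely that $\varphi|_{U(\h)}$ annihilates $p_\eta(U(\g)\ker\chi)$ (note $p_\eta(v)\in U(\h)$ for $v\in U(\g)\ker\chi$), so it descends to a well-defined functional $\bar\varphi$ on $Q=U(\h)/p_\eta(U(\g)\ker\chi)$; this gives the forward map, which is manifestly linear.

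For the backward map, given $\bar\psi\in Q^\ast$ set $\varphi=\bar\psi\circ\pi\circ p_\eta$. I would verify $\varphi\in\Phi_{Y(\chi,\eta)}$, i.e.\ that $\varphi$ is $\tau$-invariant and vanishes on $\Ann_{U(\g)}w=U(\g)\ker\eta+U(\g)\ker\chi$ (Proposition~\ref{annihilator}). Vanishing on $U(\g)\ker\eta$ is immediate because $U(\g)\ker\eta$ lies in the complement of $U(\h)$ in the decomposition (\ref{twisted HC hom}), so $p_\eta$ kills it; vanishing on $U(\g)\ker\chi$ holds since $p_\eta(U(\g)\ker\chi)$ maps to $0$ under $\pi$. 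The $\tau$-invariance is the key point, and I expect it to be the main obstacle: since the complement $\tau(U(\g)\ker\eta)+U(\g)\ker\eta$ is carried to itself by $\tau$ (as $\tau^2=\mathrm{id}$) and $U(\h)$ is $\tau$-stable, the projection commutes with $\tau$, giving $p_\eta(\tau(u))=\tau(p_\eta(u))$; and because $\tau$ fixes each $h_\alpha$ while $U(\h)$ is commutative, $\tau$ acts as the identity on $U(\h)$, whence $p_\eta(\tau(u))=p_\eta(u)$ and therefore $\varphi(\tau(u))=\varphi(u)$.

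Finally I would confirm the two maps are inverse to one another. Composing backward-then-forward returns $\bar\psi$ because $p_\eta$ restricts to the identity on $U(\h)$ (it is the identity on the first summand of (\ref{twisted HC hom})), so $\varphi|_{U(\h)}=\bar\psi\circ\pi$, which descends to $\bar\psi$. Composing forward-then-backward returns $\varphi$ because, by (\ref{vanish on N}), $\varphi=\varphi\circ p_\eta$, so $\varphi$ is recovered from its restriction to $U(\h)$ via $p_\eta$. Apart from the $\tau$-invariance verification flagged above, everything reduces to bookkeeping around the direct-sum decomposition (\ref{twisted HC hom}) and the idempotency of $p_\eta$.
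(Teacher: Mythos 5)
Your proposal is correct and follows essentially the same route as the paper: the forward map is restriction to $U(\h)$ (well-defined by (\ref{vanish on J})), the inverse is precomposition with $p_\eta$, vanishing on $\Ann_{U(\g)}w$ is checked via Proposition \ref{annihilator}, and $\tau$-invariance comes from $\tau|_{U(\h)}=\mathrm{id}$ together with the $\tau$-stability of $\tau(U(\g)\ker\eta)+U(\g)\ker\eta$, which is exactly the content of the paper's computation $\phi\circ p_\eta(u)=\phi\circ p_\eta(h)=\phi\circ p_\eta(\tau(u))$. Your phrasing of this last step as ``$p_\eta$ commutes with $\tau$ because both summands of (\ref{twisted HC hom}) are $\tau$-stable'' is just a structural restatement of the paper's element-wise argument, not a different proof.
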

\begin{proof}
First we note that $Q^\ast$ is canonically isomorphic to the space of linear functionals on $U(\h)$ which vanish on $p_\eta(U(\g)\ker\chi)$. We will show that restriction of linear functionals from $U(\g)$ to $U(\h)$ defines an isomorphism of $\Phi_{Y(\chi, \eta)}$ with $Q^\ast$:
\begin{align*}
\text{res}_{U(\h)}:\Phi_{Y(\chi, \eta)} &\xrightarrow{\sim} Q^\ast\\
\varphi &\mapsto \varphi\vert_{U(\h)}.
\end{align*}
For any $\varphi \in \Phi_{Y(\chi, \eta)}$, $\varphi|_{U(\h)}$ vanishes on $p_\eta(U(\g)\ker\chi)$ by (\ref{vanish on J}), so $\text{res}_{U(\h)}$ is well-defined. 
The inverse of the restriction map is given by
\begin{eqnarray*}
Q^\ast&\rightarrow& \Phi_{Y(\chi, \eta)}\\
\phi&\mapsto & \phi\circ p_\eta.
\end{eqnarray*}
To see that this inverse map is well-defined, we must show that $\phi\circ p_\eta$ vanishes on the annihilator $\Ann_{U(\g)}w$ and is $\tau$-invariant. We can write $a\in \Ann_{U(\g)}w$ as $a=n+u$ with $n\in U(\g)\ker\eta$ and $u\in U(\g)\ker\chi$ by Proposition \ref{annihilator}, and 
$$
\phi\circ p_\eta (a) = \phi(p_\eta(n))+\phi(p_\eta(u)) = 0
$$
because $\phi$ is assumed to vanish on $p_\eta(U(\g)\ker\chi)$ and $p_\eta(n)=0$. To see that $\phi \circ p_\eta$ is $\tau$-invariant, we write any $u \in U(\g)$ as $u=h+m$, where $h\in U(\h)$ and $m\in \tau(U(\g)\ker\eta)+U(\g)\ker\eta$ using (\ref{twisted HC hom}). Then, 
$$
\phi\circ p_\eta(u)= \phi\circ p_\eta(h+m)= \phi\circ p_\eta(h)= \phi\circ p_\eta(\tau(h))=\phi\circ p_\eta(\tau(m)+\tau(h))=\phi\circ p_\eta(\tau(u))
$$
since $p_\eta(\tau(m))=0$. 

The function $\varphi \mapsto \varphi|_{U(\h)}$ for $\varphi \in \Phi_{Y(\chi, \eta)}$ and the function $\phi \mapsto \phi \circ p_\eta$ for $\phi \in Q^\ast$ are inverse functions by (\ref{vanish on N}).  
\end{proof}
Lemma \ref{reduction to Q} reduces the study of $\Phi_{Y(\chi, \eta)}$ to the study of the space of linear functionals on $U(\h)$ which vanish on $p_\eta(U(\g)\ker\chi)$. To determine the dimension of this space, we will identify a vector space complement to $p_\eta(U(\g)\ker\chi)$ in $U(\h)$ which is isomorphic to the regular representation of $W$, and hence must be $|W|$-dimensional. Before making this identification, we need to establish two technical lemmas. 


In what follows, we identify $U(\h)$ with $S(\h)$, and consider it as a representation of $W$ in the natural way. Let 
\begin{equation}
    \label{S}
    S=\langle S(\h)^{W}_+\rangle
\end{equation}
be the ideal in $S(\h)$ generated by the $W$-invariant homogeneous polynomials with positive degree. Clearly $S$ is $W$-stable, and by \cite[Ch.V, \S5.2, Thm. 2(i)]{Bourbaki4-6}, it admits a $W$-stable complement $C \subset S(\h)$ such that  
\begin{equation}
\label{C plus S}
S(\h)=C\oplus S 
\end{equation}
is a graded, $W$-stable decomposition.  Moreover, for any such $W$-stable complement $C \subset S(\h)$, the representation of $W$ in $C$ is isomorphic to the regular representation $\mathbb{C}[W]$ of $W$ \cite[Ch.V, \S5.2, Theorem 2(ii)]{Bourbaki4-6}. In particular, $\dim{C}=|W|$. Fix such a $W$-stable complement $C$. We refer to $C$ as the space of {\em $W$-coinvariants}.
\begin{remark}
There are many possible $W$-stable complements to $S$ in $S(\h)$. For example, one such complement is the space of $W$-harmonic polynomials\footnote{Under a suitable choice of orthonormal basis of $\mf{h}$, these $W$-harmonic polynomials are solutions to Laplace's equation, so they are harmonic in the usual sense of the word.} which arise as solutions to a certain system of partial differential equations determined by a generating set of $S(\h)^W$. For more details on this perspective, see \cite[\S 8.2]{Bergeron}. For our purposes, it does not matter which complement we choose, as we are only interested in its dimension.   
\end{remark}

The following technical lemma will be needed for induction arguments in the proof of Lemma \ref{complement decomposition}.
\begin{lemma}
\label{key lemma} Let $S$ be as in (\ref{S}), and $p_\eta$ the $\eta$-twisted Harish-Chandra projection (Definition \ref{twisted HC proj}).
\begin{enumerate}[(a)]
\item If $s\in S$, then there exists an element $r\in p_\eta(U(\g)\ker\chi)$, and an element $e\in U(\h)$ such that
$$
s=r+e \text{ and deg$(e)<\text{deg}(s)$.}
$$
\item Additionally, if $r'\in p_\eta(U(\g)\ker\chi)$, then there exists an element $s'\in S$, and an element $f\in U(\h)$ such that
$$
r'=s'+f \text{ and deg$(f)<\text{deg}(r')$.}
$$
\end{enumerate}
\end{lemma}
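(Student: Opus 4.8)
The statement is an assertion about leading terms: it says that $S$ and $p_\eta(U(\g)\ker\chi)$ have the same image in the associated graded of $U(\h)=S(\h)$ for the polynomial-degree filtration, part (a) realizing every leading term of $S$ by $p_\eta(U(\g)\ker\chi)$ and part (b) the reverse. The plan is therefore to reduce both parts to a computation of top-degree parts, organized around one structural fact: for a central element, the top-degree part of its $\eta$-twisted Harish-Chandra projection coincides with its ordinary Harish-Chandra (Chevalley) leading term, which is a homogeneous $W$-invariant.

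I would first isolate this structural fact. For $z\in Z(\g)$ of Poincar\'e--Birkhoff--Witt degree $d\ge 1$, apply formula (\ref{box formula}) monomial by monomial: since $p_\eta(y^{\overline I}h^Jx^K)=\eta(x^K)\eta(x^I)h^J$ has $\h$-degree $|J|\le |I|+|J|+|K|$, every contribution produced by the $\eta$-twist strictly lowers degree. Hence the degree-$d$ part of $p_\eta(z)$ equals the degree-$d$ part of the untwisted projection $p_0(z)$, which by Remark \ref{HC hom} (and \cite{Bourbaki7-9}) is the Harish-Chandra image; passing to the associated graded, this leading term is the Chevalley restriction $\gr(z)|_\h$, a nonzero homogeneous element of $S(\h)^W$ of degree $d$. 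The Chevalley restriction isomorphism $S(\g)^\g\xrightarrow{\sim}S(\h)^W$ then supplies surjectivity: every homogeneous $u\in S(\h)^W_+$ of degree $d$ is the leading term of $p_\eta(z)$ for some $z\in Z(\g)$, and since $\chi(z)$ is a scalar, also the leading term of $p_\eta(z-\chi(z))$ with $z-\chi(z)\in\ker\chi$.

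For part (a), reduce to homogeneous $s$ and write $s=\sum_i q_iu_i$ with $u_i\in S(\h)^W_+$ homogeneous and $q_i\in U(\h)$. Choosing $k_i=z_i-\chi(z_i)\in\ker\chi$ with $p_\eta(k_i)=u_i+(\text{lower degree})$ as above, I would use (\ref{h moves inside}) to move $q_i$ inside the projection, namely $q_ip_\eta(k_i)=p_\eta(q_ik_i)$ with $q_ik_i\in U(\h)\ker\chi\subseteq U(\g)\ker\chi$. Summing yields $s=r+e$ with $r=\sum_ip_\eta(q_ik_i)\in p_\eta(U(\g)\ker\chi)$ and $\deg e<\deg s$, the lower-degree part of a non-homogeneous $s$ being absorbed into $e$.

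Part (b) is where the real work lies. For $k\in\ker\chi$, use (\ref{center in twisted sum}) to write $k=p_\eta(k)+n$ with $n\in U(\g)\ker\eta\subseteq\ker p_\eta$; since $U(\g)\ker\eta$ is a left ideal, $un\in\ker p_\eta$ and so
\[
p_\eta(uk)=p_\eta\!\big(u\,p_\eta(k)\big),
\]
reducing to projecting a product against an element $\ell=p_\eta(k)\in U(\h)$. The crucial computation is that $p_\eta(u\ell)=p_\eta(u)\ell+(\text{lower degree})$ for all $u\in U(\g)$: on a monomial $y^{\overline I}h^Jx^K$ this follows by commuting $\ell$ leftward past $x^K$, which shifts the $\h$-variables only by the constant weight of $x^K$ and hence alters $\ell$ by strictly lower-degree terms. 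Since the leading term of $p_\eta(k)$ lies in $S(\h)^W_+\subseteq S$ and $S$ is an ideal, the top-degree part of $p_\eta(uk)$ lies in $S$; taking $s'$ to be the homogeneous top-degree part of a general $r'=\sum_j p_\eta(u_jk_j)$ gives $r'=s'+f$ with $s'\in S$ and $\deg f<\deg r'$. The main obstacle is precisely this leading-term bookkeeping — verifying that the $\eta$-twist in (\ref{box formula}) is degree-lowering so that the surviving top part is the $W$-invariant Chevalley restriction, together with the commutation estimate $p_\eta(u\ell)\equiv p_\eta(u)\ell$ modulo lower degree that lets the ideal $S$ absorb the $U(\g)$-multiplier; neither is conceptually deep, but both require careful control of degrees.
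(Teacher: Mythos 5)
Your proposal is correct and takes essentially the same approach as the paper's own proof: both arguments rest on the fact that for central $z$ the twisted projection $p_\eta(z)$ agrees with the Harish-Chandra image $p_0(z)$ up to strictly lower-degree terms (so its leading term is a homogeneous $W$-invariant), prove part (a) by realizing the homogeneous generators of $S$ as leading terms of $p_\eta(z-\chi(z))$ and absorbing $U(\h)$-multipliers via (\ref{h moves inside}), and prove part (b) by showing $p_\eta(u(z-\chi(z)))$ equals $p_\eta(u)$ times a $W$-invariant modulo lower degree, which the ideal $S$ then absorbs. The differences are cosmetic rather than structural: you phrase the Harish-Chandra input as Chevalley restriction of symbols instead of using the $\rho$-shifted isomorphism $t_\rho\circ p_0$ (equivalent at leading order, since $t_\rho$ is the identity modulo lower degree), you project $k$ rather than $u$ in the reduction $p_\eta(uk)=p_\eta\big(u\,p_\eta(k)\big)$, and you verify the commutation estimate directly rather than citing (\ref{eta-zero-relation}); in both treatments the passage from single generators $p_\eta(u(z-\chi(z)))$ to arbitrary sums is handled by the same linearity argument.
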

\begin{proof}
Let $t_\rho$ be the algebra automorphism of $U(\h)$ induced by the $\rho$-twisting map $h\mapsto h-\rho(h)$ for $h\in\h$. The composition of the Harish-Chandra homomorphism $p_0$ (Definition 3.6, Remark 3.7) with $t_\rho$ provides an algebra isomorphism 
\[
t_\rho \circ p_0: Z(\g) \xrightarrow{\sim} S(\h)^W
\]
\cite[Ch. VIII, \S8.5, Thm. 2]{Bourbaki7-9}.

The ideal $S$ is generated by $S(\h)^W_+$, so any element of $S$ can be expressed as a sum of elements of the form
$$
h t_\rho(p_0(z))
$$
for various $z\in Z(\g)$ and $h\in S(\h)=U(\h)$. Our first step in the proof is to show that any element of the form $h t_\rho(p_0(z))$ satisfies
(a). 

Any $u \in U(\g)$ is a linear combination of Poincar\'{e}--Birkhoff--Witt basis elements of the form 
\[
y^{\overline{I}}h^Jx^K:=y_{\alpha_n}^{i_n}\cdots y_{\alpha_1}^{i_1}h_{\alpha_1}^{j_1}\cdots h_{\alpha_r}^{j_r}x_{\alpha_1}^{k_1}\cdots x_{\alpha_n}^{k_n}.
\]
Hence we can express $u\in U(\g)$ as a sum 
\begin{equation}
    \label{z proj}\tag{3.11}
    u=p_0(u)+\sum_{I,J,K} a_{I,J,K}(u) y^{\overline{I}}h^Jx^K
\end{equation}
where $a_{I,J,K}(u) \in \C$ and $a_{I,J,K}(u)=0$ if $I=J=(0,\ldots, 0)$. By applying $p_\eta$ to (\ref{z proj}) and using equation (3.3), we obtain 
\begin{equation}
\label{eta to zero}\tag{3.12}
p_\eta(u)=p_0(u)+\sum_{I,J,K} a_{I,J,K}(u)\eta(x^{K}x^I)h^J.
\end{equation}

Because the composition $t_\rho \circ p_0:Z(\g) \rightarrow S(\h)^W$ induces an isomorphism between the corresponding graded objects (with the grading of $Z(\g)$ induced by the natural filtration of $U(\g)$ by $\g$) \cite[Ch.VIII \S8.5 proof of Thm. 2]{Bourbaki7-9}, for $z\in Z(\g)$, we have
\begin{equation}
\label{lower degree h}\tag{3.13}
\text{deg}(h^J)<\text{deg}(p_0(z))\text{ for all $J$ such that $a_{I,J,K}(z)\neq 0$}.
\end{equation}

Hence the image of $z \in Z(\g)$ under the $\eta$-twisted Harish-Chandra projection $p_\eta$ and the image of $z$ under the Harish-Chandra homomorphism $p_0$ agree up to lower degree terms. To increase readability in the arguments below, we will introduce some notation to describe this phenomenon in general. Write $LDP$ for an element in $U(\h)$ with degree strictly lower than the element immediately preceding it in an expression\footnote{Here LDP stands for ``lower degree polynomial.''}. For example, by (\ref{lower degree h}), we can rewrite (\ref{eta to zero}) as 
\begin{equation*}
\label{eta to zero ldp}\tag{3.14}
p_\eta(z)=p_0(z)+LDP.
\end{equation*}
Similarly, for all $h\in U(\h)$, $t_\rho(h)=h+\text{LDP}$ and $p_\eta(hz)=hp_0(z)+\text{LDP}$. Therefore, we have
\begin{equation}
\label{first reduction}\tag{3.15}
ht_\rho(p_0(z))=hp_0(z)+LDP = p_\eta(hz)+LDP. 
\end{equation}
By the linearity of $p_\eta$, we have 
\begin{equation}
\label{eta-zero-relation}\tag{3.16}
    ht_\rho(p_0(z))=p_\eta(h(z-\chi(z))) + LDP.
\end{equation}
We conclude that any element of $S$ which is equal to $ht_\rho(p_0(z))$ for some $h\in U(\h)$ and $z\in Z(\g)$ satisfies (a). 

An arbitrary element $s\in S$ is a sum of elements of the form $ht_\rho(p_0(z))$ for various $h\in U(\h)$ and $z\in Z(\g)$, so by the linearity of $p_\eta$, there exists $k\in U(\g)\ker\chi$ such that 
\[
s = p_\eta(k)+LDP.
\]
This proves (a). 

Part (b) follows from an analogous argument. Any $r' \in p_\eta(U(\g)\ker\chi)$ is equal to a sum of elements of the form 
\[
p_\eta(u(z-\chi(z)))
\]
for various $u \in U(\g)$ and $z \in Z(\g)$. We claim that $p_\eta(u(z-\chi(z)))=p_\eta(p_\eta(u)(z-\chi(z)))$. Indeed, using (\ref{twisted HC hom}) to express $u$ as a sum $u=p_\eta(u)+m+n$ for $m\in \tau(U(\g)\ker{\eta})$ and $n \in U(\g)\ker{\eta}$, we have
\begin{align*}
p_\eta(u(z-\chi(z)))&=p_\eta((p_\eta(u)+m+n)(z-\chi(z)))\\
&=p_\eta(p_\eta(u)(z-\chi(z))+m(z-\chi(z))+(z-\chi(z))n)\\
&=p_\eta(p_\eta(u)(z-\chi(z))).
\end{align*}
By (\ref{eta-zero-relation}), we conclude that 
\[
p_\eta(u(z-\chi(z)))=p_\eta(u)t_\rho(p_0(z)) + LDP.
\]
For each $z\in Z(\g)$, $t_\rho(p_0(z))\in S(\h)^W$, and therefore is equal to the sum of an element of $ S$ and a constant polynomial. Hence for each $z\in Z(\g)$ and $u\in U(\g)$, there exists $s\in S$ such that
\[
p_\eta(u(z-\chi(z)))=s + LDP.
\]
Because each $r'\in p_\eta(U(\g)\ker\chi)$ is equal to a sum of elements of the form $p_\eta(u(z-\chi(z)))$, there exists $s'\in S$ such that $r'=s'+LDP$, which proves (b). 
\end{proof}
Our final step in establishing the dimension of the space $\Phi_{Y(\chi, \eta)}$ is to show that the $|W|$-dimensional vector space $C$, which is defined by the decomposition (\ref{C plus S}) to be a vector space complement to $S$ in $U(\h)$, also forms a vector space complement to $p_\eta(U(\g)\ker\chi)$ in $U(\h)$. Because we can realize linear functionals in $\Phi_{Y(\chi, \eta)}$ as linear functionals on $U(\h)$ which vanish on $p_\eta(U(\g)\ker\chi)$ by Lemma \ref{reduction to Q}, the dimension of a complement of $p_\eta(U(\g)\ker\chi)$ in $U(\h)$ determines the dimension of $\Phi_{Y(\chi, \eta)}$. 

\begin{lemma}
\label{complement decomposition} Let $C$ be the $W$-stable complement to $S=\langle S(\h) ^W_+\rangle$ in equation (\ref{C plus S}). As vector spaces, 
$$
U(\h)=C\oplus p_\eta(U(\g)\ker\chi).
$$
\end{lemma}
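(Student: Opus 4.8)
The plan is to show that $S$ and $P:=p_\eta(U(\g)\ker\chi)$ have the same leading terms in every degree, and then to exploit the fact that $C$ is a \emph{graded} complement to $S$. The essential point is that, while $S$ is a graded ideal, $P$ is only a filtered (not graded) subspace of $U(\h)=S(\h)$: the $\eta$-twist built into $p_\eta$ produces lower-order corrections. Consequently the argument must be organized as an induction on degree rather than a naive comparison of homogeneous components, and this filtered-versus-graded mismatch is the only genuine subtlety. Lemma \ref{key lemma} is precisely the tool that controls it, asserting that $S$ and $P$ agree modulo strictly-lower-degree terms.

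First I would prove that the sum $C+P$ is direct. Suppose $0\neq c\in C\cap P$, and let $n=\deg c$. Because $c\in C$ and $C$ is graded, the top-degree homogeneous component $c_n$ lies in the degree-$n$ part $C_n$. Because $c\in P$, part (b) of Lemma \ref{key lemma} produces $s'\in S$ and $f\in U(\h)$ with $c=s'+f$ and $\deg f<n$; comparing degree-$n$ components then forces $c_n$ to equal the degree-$n$ part of $s'$, which lies in $S_n$ since $S$ is a graded ideal. Thus $c_n\in C_n\cap S_n=0$ by the graded decomposition (\ref{C plus S}), contradicting $\deg c=n$. Hence $C\cap P=0$.

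Next I would prove $C+P=U(\h)$ by induction on degree, showing that every $u\in U(\h)$ with $\deg u\le n$ lies in $C+P$. The base case $n=0$ holds because $S$ is generated by invariants of positive degree, so $S_0=0$ and therefore $C_0=\C$; all constants lie in $C$. For the inductive step, use the graded decomposition (\ref{C plus S}) to write $u=c+s$ with $c\in C$, $s\in S$, and $\deg s\le n$; then apply part (a) of Lemma \ref{key lemma} to write $s=r+e$ with $r\in P$ and $\deg e<\deg s\le n$. This yields $u=c+r+e$ with $c\in C$, $r\in P$, and $e\in U(\h)$ of degree strictly less than $n$, so $e\in C+P$ by the inductive hypothesis and hence $u\in C+P$.

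Combining directness with spanning gives $U(\h)=C\oplus P$, which is the assertion. The whole argument is bookkeeping on the degree filtration: all the real algebraic input — that $S$ and $P$ coincide up to lower-degree terms — is already supplied by Lemma \ref{key lemma}, so I expect no obstacle beyond carefully tracking leading terms through the two reductions. It is worth noting as a sanity check that, since $C\cong\C[W]$ has dimension $|W|$, this decomposition immediately identifies $\dim Q^\ast=\dim\Phi_{Y(\chi,\eta)}=|W|$ via Lemma \ref{reduction to Q}.
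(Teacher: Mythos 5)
Your proposal is correct and follows essentially the same route as the paper: the spanning half is the paper's argument verbatim (the graded decomposition $U(\h)=C\oplus S$ of (\ref{C plus S}) plus Lemma \ref{key lemma}(a) to trade an element of $S$ for an element of $p_\eta(U(\g)\ker\chi)$ up to strictly lower-degree terms, then induct). Your directness half is a mild streamlining rather than a different method: where the paper folds $C\cap p_\eta(U(\g)\ker\chi)=0$ into the same induction using Lemma \ref{key lemma}(b), you obtain it in one non-inductive step by comparing top homogeneous components, which is legitimate precisely because the complement $C$ in (\ref{C plus S}) is graded, as the paper stipulates.
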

\begin{proof}
We begin with the graded decomposition 
$$
U(\h)=C\oplus S
$$
and proceed by induction on degree. The base case is trivial, as $p_\eta(U(\g)\ker\chi)$ contains no nonzero constant polynomials. Let $U(\h)_i$ denote the set of polynomials with degree less than or equal to $i$. Assume $U(\h)_j=C_j\oplus p_\eta(U(\g)\ker\chi)_j$ for all $j\leq i$. Let $h\in U(\h)_{i+1}$. Then 
$$
h=c+s
$$
where $c\in C_{i+1}$ and $s\in S_{i+1}$. By Lemma \ref{key lemma}, we can write $s$ as 
$$
s=r+e
$$
with $r\in p_\eta(U(\g)\ker\chi)$ and $\text{deg}(e)<\text{deg}(s)\le i+1$. Therefore 
$$
h=c+r+e.
$$
By the induction assumption, $e$ can be written uniquely as $e=c'+r'$, with $c'\in C_j$ and $r'\in p_\eta(U(\g)\ker\chi)_j$ for some $j\leq i$. So we have a decomposition of $h$ given by
$$
h=(c+c')+(r+r').
$$
Moreover, since $\text{deg}(c'),\text{deg}(r')<i+1$, we have that $(c+c')\in C_{i+1}$ and $(r+r')\in p_\eta(U(\g)\ker\chi)_{i+1}$. 

To complete the proof, it remains to show that $C_{i+1}\cap p_\eta(U(\g)\ker\chi)_{i+1}=0$. Assume $x\in C_{i+1}\cap p_\eta(U(\g)\ker\chi)_{i+1}$. By Lemma \ref{key lemma}, $x=s+f$, where $f\in U(\h)$ has lower degree than $x$ and $s\in S$. We can decompose $f$ as $f=c+s'$ with $c\in C_i$ and $s\in S_i$. So $x=s+c+s'$. Therefore $s+s'\in S\cap C$, which implies that $s+s'=0$ and $x=c$. But $\deg{c} \le i$, so $x\in C_i\cap p_\eta(U(\g)\ker\chi)_i$. Hence $x$ must be $0$ by the induction hypothesis. 
\end{proof}

Lemma \ref{complement decomposition} implies our main result. 
\begin{theorem}
\label{main theorem}
Let $\Psi_{Y(\chi, \eta)}$ be the vector space of contravariant forms on the nondegenerate Whittaker module $Y(\chi, \eta)$. Then
$$
\dim\Psi_{Y(\chi, \eta)}=|W|.
$$
\end{theorem}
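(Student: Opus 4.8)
The plan is to assemble the theorem directly from the chain of vector space isomorphisms established in the preceding results, since by this point the substantive work has already been done. First I would invoke Notation \ref{Phi}, which via Proposition \ref{forms-functionals} replaces the space of contravariant forms $\Psi_{Y(\chi, \eta)}$ with the space $\Phi_{Y(\chi, \eta)}$ of $\tau$-invariant linear functionals on $U(\g)$ vanishing on $\Ann_{U(\g)}w$. Lemma \ref{reduction to Q} then identifies $\Phi_{Y(\chi, \eta)}$ with $Q^\ast$, where $Q = U(\h)/p_\eta(U(\g)\ker\chi)$. This reduces the computation of $\dim \Psi_{Y(\chi, \eta)}$ to the computation of $\dim Q$.

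Next I would apply Lemma \ref{complement decomposition}, which gives the direct sum decomposition $U(\h) = C \oplus p_\eta(U(\g)\ker\chi)$. This exhibits the $W$-stable complement $C$ as a full set of coset representatives for $Q$, so that the quotient map restricts to a vector space isomorphism $C \xrightarrow{\sim} Q$. Since $C$ was defined in (\ref{C plus S}) as a complement to $S = \langle S(\h)^W_+\rangle$, the classical result of \cite[Ch.V, \S5.2, Thm. 2(ii)]{Bourbaki4-6} cited there shows that $C$ carries the regular representation $\C[W]$ of $W$, and in particular $\dim C = |W|$. Because $C$ is finite-dimensional, $\dim Q = \dim C = |W|$, and dualizing a finite-dimensional space preserves dimension, so $\dim \Psi_{Y(\chi, \eta)} = \dim Q^\ast = |W|$.

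In truth there is no remaining obstacle at the level of this theorem: the genuine difficulty has already been overcome in Lemma \ref{key lemma} and Lemma \ref{complement decomposition}, which show that the image $p_\eta(U(\g)\ker\chi)$ of the central ideal under the $\eta$-twisted Harish-Chandra projection admits the same complement $C$ in $U(\h)$ that the invariant ideal $S$ does. The only point requiring a moment's care is the finiteness needed to pass from $\dim Q$ to $\dim Q^\ast$; this is guaranteed precisely because $C$, being isomorphic to $\C[W]$, is finite-dimensional. Consequently the space of contravariant forms is genuinely $|W|$-dimensional, rather than merely admitting a complement of that codimension.
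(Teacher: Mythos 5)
Your proposal is correct and follows essentially the same route as the paper's proof: both assemble the chain $\Psi_{Y(\chi,\eta)} \cong \Phi_{Y(\chi,\eta)} \cong Q^\ast$ via Proposition \ref{forms-functionals} and Lemma \ref{reduction to Q}, then use Lemma \ref{complement decomposition} together with the Bourbaki coinvariant result to conclude the dimension is $|W|$. The only cosmetic difference is that you identify $C \xrightarrow{\sim} Q$ directly and then dualize, whereas the paper identifies $Q^\ast \cong C^\ast$ by describing both as functionals on $U(\h)$ vanishing on $p_\eta(U(\g)\ker\chi)$; your explicit remark that finite-dimensionality of $C$ justifies $\dim Q^\ast = \dim Q$ is a point the paper leaves implicit.
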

\begin{proof}
The vector space $C$ has dimension $\dim{C}=|W|$ because $C$ is isomorphic to $\mathbb{C}[W]$ \cite[Ch. V, \S5.2, Theorem 2(ii)]{Bourbaki4-6}. Both $Q^\ast$ and $C^\ast$ are isomorphic to the space of linear functionals on $U(\h)$ which vanish on $p_\eta(U(\g) \ker{\chi})$. Therefore $Q^\ast \cong C^\ast$. By Lemma \ref{reduction to Q}, and Proposition \ref{forms-functionals} we conclude that  
$$
\Psi_{Y(\chi, \eta)} \cong C^\ast.
$$
Hence $\dim{\Psi_{Y(\chi, \eta)}} = |W|$. 
\end{proof}

\section{Induction of contravariant forms}
\label{induction of contravariant forms}
In this section, we prove that the induction functor from the category of nondegenerate Whittaker modules for a Levi factor $\mf{l}\subset \mf{g}$ to degenerate Whittaker modules for $\mf{g}$ induces an isomorphism of the corresponding spaces of contravariant forms. In particular, this implies the existence of the Shapovalov form on a Verma module as a corollary to Theorem \ref{main theorem}. It also gives a formula for the dimension of the space of contravariant forms on the standard degenerate Whittaker modules introduced by McDowell in \cite{McDowell}. 

For a reductive Lie algebra $\mf{a}$ with a fixed Cartan subalgebra $\mf{c}\subset \mf{a}$ and triangular decomposition $\mf{a}=\overline{\mf{m}} \oplus \mf{c} \oplus \mf{m}$, let $\mc{N}(\mf{a})$ be the category of finitely generated $U(\mf{a})$-modules which are locally $Z(\mf{a})$-finite and locally $U(\mf{m})$-finite. This category was introduced by Mili\v{c}i\'{c}--Soergel in \cite{CompositionSeries} as a natural category containing both Kostant's nondegenerate Whittaker modules $Y(\chi, \eta)$ from Section \ref{classification of contravariant forms} and all modules in Bernstein--Gelfand--Gelfand's category $\mc{O}$ \cite{BGG}. For a fixed Lie algebra morphism $\eta:\mf{m}\rightarrow \C$, the collection of modules $V \in \mc{N}(\mf{a})$ with the property that for each $v \in V$, there exists $k \in \Z_{>0}$ such that for all $x \in \mf{m}$, $(x-\eta(x))^kv=0$,
forms a full subcategory $\mc{N}(\mf{a})_\eta\subset \mc{N}(\mf{a})$. For $\eta$ nondegenerate (Definition \ref{nondegenerate}), the category $\mc{N}(\mf{a})_\eta$ contains the $\mf{a}$-modules $Y(\chi, \eta)$ for all infinitesimal characters $\chi:Z(\mf{a})\rightarrow \C$. The categories $\mc{N}(\mf{a})_\eta$ for $\eta$ nondegenerate are very simple: each such $\mc{N}(\mf{a})_\eta$ is equivalent to the category of finite-dimensional $Z(\mf{a})$-modules\footnote{This fact is originally due to Kostant \cite{Kostant}, a proof in language more closely aligned with this paper can be found in \cite[Thm. 5.9]{TwistedSheaves}.}, and the $Y(\chi, \eta)$ exhaust the irreducible objects in $\mc{N}(\mf{a})_\eta$.  

Now let $\g=\overline{\n}\oplus \h \oplus \n$ be a semisimple Lie algebra and return to the setting of Section \ref{preliminaries and notation}. Fix a Lie algebra morphism $\eta:\mf{n}\rightarrow \C$, and as in Section \ref{preliminaries and notation}, let $\Pi_\eta$ be the set of simple roots with the property that $\eta\neq 0$ on the corresponding root subspace of $\mf{g}$. Let $\Sigma_\eta \subset \h^*$ be the root system generated by $\Pi_\eta$ and $W_\eta \subset W$ the corresponding Weyl group. The morphism $\eta$ determines several Lie subalgebras of $\g$. In particular, we name
\[
\mf{l}_\eta = \h \oplus \bigoplus_{\alpha \in \Sigma_\eta}\g_\alpha, \hspace{2mm} \mf{n}_\eta = \bigoplus_{\alpha \in \Sigma_\eta^+}\g_\alpha, \hspace{2mm} \mf{n}^\eta = \bigoplus_{\alpha \in \Sigma^+-\Sigma_\eta^+} \g_\alpha, \hspace{2mm} \mf{p}_\eta = \mf{l}_\eta \oplus \n^\eta,  
\]
and define $\overline{\mf{n}}_\eta$ and $\overline{\mf{n}}^\eta$ in the obvious way. Then $\mf{l}_\eta$ is a reductive Lie subalgebra of $\mf{g}$, and $\eta|_{\mf{n}_\eta}$ is nondegenerate. 

There is an induction functor from the category of nondegenerate Whittaker modules for the Levi factor $\mf{l}_\eta$ to the category of degenerate Whittaker modules for all of $\mf{g}$. 
\begin{definition}
\label{induction functor}
Define the {\em induction functor}
\[
\Ind_{\mf{l}_\eta}^\mf{g}: \mc{N}(\mf{l}_\eta)_\eta \rightarrow \mc{N}(\mf{g})_\eta
\]
by $\Ind_{\mf{l}_\eta}^\g(V)=U(\g)\otimes_{U(\mf{p}_\eta)}V$ for a module $V \in \mc{N}(\mf{l}_\eta)_\eta$. 
\end{definition}
When applied to irreducible modules, the functor $\Ind_{\mf{l}_\eta}^\g$ induces an isomorphism on the space of contravariant forms. 
\begin{theorem}
\label{second result}
Fix a Lie algebra morphism $\eta:\mf{n}\rightarrow \C$. For any irreducible module $V \in \mc{N}(\mf{l}_\eta)_\eta$, there is a vector space isomorphism 
\[
\Psi_{V}\simeq \Psi_{\Ind_{\mf{l}_\eta}^\g V}
\]
between the vector space $\Psi_{V}$ of contravariant forms on the $\mf{l}_\eta$-module $V$ and the vector space $\Psi_{\Ind_{\mf{l}_\eta}^\g V}$ of contravariant forms on the $\mf{g}$-module $\Ind_{\mf{l}_\eta}^\g V$.
\end{theorem}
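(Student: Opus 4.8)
The plan is to reduce everything to $\tau$-invariant linear functionals via Proposition \ref{forms-functionals} and to transport them between the two enveloping algebras by an explicit projection $q\colon U(\g)\to U(\mf{l}_\eta)$. Since $V$ is irreducible it is cyclic, generated by some $v_0\neq 0$, and because $\n^\eta$ acts on $V$ by $\eta|_{\n^\eta}=0$ one has $U(\p_\eta)v_0=U(\mf{l}_\eta)v_0=V$, so $M\defeq\Ind_{\mf{l}_\eta}^\g V=U(\g)v_0$ is cyclic with the same generator. As $\tau$ interchanges $\g_\alpha$ and $\g_{-\alpha}$ and fixes $\h$, it restricts to the analogous antiautomorphism of $U(\mf{l}_\eta)$, so the analogue of $\Phi_X$ (Notation \ref{Phi}) for the pair $(\mf{l}_\eta,V)$, which I also write $\Phi_V$, is defined. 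By Proposition \ref{forms-functionals} it then suffices to produce a vector space isomorphism $\Phi_V\cong\Phi_M$ between the $\tau$-invariant functionals on $U(\mf{l}_\eta)$, resp. $U(\g)$, vanishing on $\Ann_{U(\mf{l}_\eta)}v_0$, resp. $\Ann_{U(\g)}v_0$.

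First I would set up the projection. The decomposition $\g=\overline{\n}^\eta\oplus\mf{l}_\eta\oplus\n^\eta$ yields a PBW direct sum $U(\g)=U(\mf{l}_\eta)\oplus(\overline{\n}^\eta U(\g)+U(\g)\n^\eta)$; let $q\colon U(\g)\to U(\mf{l}_\eta)$ be projection onto the first summand, so that in PBW order $q(\bar u\,l\,n)=\epsilon(\bar u)\epsilon(n)\,l$ for $\bar u\in U(\overline{\n}^\eta)$, $l\in U(\mf{l}_\eta)$, $n\in U(\n^\eta)$, where $\epsilon$ is the augmentation. Two facts about $q$ drive the argument: $q$ restricts to the identity on $U(\mf{l}_\eta)$, and $q\circ\tau=\tau\circ q$, the latter because $\tau$ sends the ordered product $\bar u\,l\,n$ to the ordered product $\tau(n)\tau(l)\tau(\bar u)$ with $\tau(n)\in U(\overline{\n}^\eta)$ and $\tau(\bar u)\in U(\n^\eta)$, combined with $\epsilon\circ\tau=\epsilon$. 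The geometric meaning of $q$ is the essential input: under $M\cong U(\overline{\n}^\eta)\otimes_\C V$ with $V=1\otimes V$, and using $\n^\eta v_0=0$, for every $X\in U(\g)$ the component of $Xv_0$ in $V$ along $\overline{\n}^\eta M$ equals $q(X)v_0$.

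With $q$ in hand I would define $\Phi_V\to\Phi_M$, $\psi\mapsto\psi\circ q$, and $\Phi_M\to\Phi_V$, $\varphi\mapsto\varphi|_{U(\mf{l}_\eta)}$, and check that these are well defined and mutually inverse. That $\psi\circ q$ is $\tau$-invariant follows from $q\tau=\tau q$; that it kills $\Ann_{U(\g)}v_0$ follows from the component identity above, which gives $q(\Ann_{U(\g)}v_0)\subseteq\Ann_{U(\mf{l}_\eta)}v_0$. In the other direction $\varphi|_{U(\mf{l}_\eta)}$ kills $\Ann_{U(\mf{l}_\eta)}v_0=\Ann_{U(\g)}v_0\cap U(\mf{l}_\eta)$. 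One composite is the identity since $q|_{U(\mf{l}_\eta)}=\mathrm{id}$; for the other I must show every $\varphi\in\Phi_M$ vanishes on $\ker q=\overline{\n}^\eta U(\g)+U(\g)\n^\eta$, so that $\varphi=\varphi\circ q$. This is where contravariance enters: writing $\varphi(X)=\langle Xv_0,v_0\rangle$ for the associated form, an element $Xn$ with $n\in\n^\eta$ gives $Xnv_0=0$ since $\n^\eta v_0=0$, while an element $YX$ with $Y\in\overline{\n}^\eta$ satisfies $\langle YXv_0,v_0\rangle=\langle Xv_0,\tau(Y)v_0\rangle=0$ because $\tau(Y)\in\n^\eta$ also annihilates $v_0$.

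The main obstacle, and the place where the hypotheses genuinely bite, is the identification ``$V$-component of $Xv_0$ equals $q(X)v_0$'' together with $q\circ\tau=\tau\circ q$. The non-semisimplicity of the $\h$-action on the Whittaker-type module $V$ rules out the usual weight-space orthogonality argument, so the whole proof is routed through the single cyclic vector $v_0$ and Proposition \ref{forms-functionals}; the degeneracy condition $\eta|_{\n^\eta}=0$ (equivalently, that $\n^\eta$ annihilates $V\hookrightarrow M$) is exactly what forces both the annihilator containment $q(\Ann_{U(\g)}v_0)\subseteq\Ann_{U(\mf{l}_\eta)}v_0$ and the vanishing of $\varphi$ on $\ker q$. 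Verifying the PBW bookkeeping behind $q\circ\tau=\tau\circ q$ and behind the $V$-component formula is routine but constitutes the technical heart of the argument.
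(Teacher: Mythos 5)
Your proof is correct, and its skeleton is the same as the paper's: reduce both spaces of forms to $\tau$-invariant functionals via Proposition \ref{forms-functionals}; introduce the PBW projection $q:U(\g)\to U(\lev_\eta)$ (this is the paper's $\pi_\eta$ from (\ref{Ul decomp}), and your identity $q\circ\tau=\tau\circ q$ is the paper's (\ref{trans-stable-proj})); and check that restriction and $\phi\mapsto\phi\circ q$ are mutually inverse, the vanishing of any $\varphi\in\Phi_{\Ind_{\lev_\eta}^\g V}$ on $\ker q$ following from $\tau$-invariance together with $U(\g)\n^\eta\subseteq\Ann_{U(\g)}v_0$. The genuine difference lies in the one delicate step, the well-definedness of the extension map, i.e.\ $q\bigl(\Ann_{U(\g)}v_0\bigr)\subseteq\Ann_{U(\lev_\eta)}v_0$. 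The paper establishes the equivalent statement $U(\lev_\eta)\overline{w}\cap\overline{\n}^\eta U(\g)\overline{w}=0$ (equation (\ref{no overlap})) by appealing to McDowell's structure theory---the semisimple action of $\z=\rad\lev_\eta$, the partial order on $\z$-weights, and \cite[Prop.~2.4 and Prop.~1.8(c)]{McDowell}---whereas you deduce it from elementary PBW bookkeeping: $\Ind_{\lev_\eta}^\g V\cong U(\overline{\n}^\eta)\otimes_\C V$ as left $U(\overline{\n}^\eta)$-modules, so the augmentation decomposition $U(\overline{\n}^\eta)=\C 1\oplus\overline{\n}^\eta U(\overline{\n}^\eta)$ gives $\Ind_{\lev_\eta}^\g V=V\oplus\overline{\n}^\eta\bigl(\Ind_{\lev_\eta}^\g V\bigr)$, which is precisely (\ref{no overlap}) and immediately yields your component formula for $q$. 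Your route buys self-containedness and a little extra generality: no input from \cite{McDowell} is needed, and irreducibility of $V$ is used only to guarantee cyclicity, so you also bypass the paper's opening reduction (via the classification of irreducibles in $\mc{N}(\lev_\eta)_\eta$) to the modules $Y(\chi,\eta)$ with their canonical Whittaker vector; your argument applies verbatim to any cyclic module in $\mc{N}(\lev_\eta)_\eta$. What the paper's route offers in exchange is contact with the weight-space structure of the standard modules $M(\chi,\eta)$, but for this theorem alone your argument is leaner and equally rigorous.
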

\begin{proof}

All irreducible modules in $\mc{N}(\mf{l}_\eta)_\eta$ are of the form $Y(\chi, \eta)$ for some infinitesimal character $\chi:Z(\mf{l}_\eta)\rightarrow \C$. Fix a $\mf{l}_\eta$-module $Y(\chi, \eta)$. Because $Y(\chi, \eta)$ is cyclically generated by the vector $w=1 \otimes 1$,  Proposition \ref{forms-functionals} implies that 
\[
\Psi_{Y(\chi, \eta)}\simeq \Phi_{Y(\chi, \eta)}, 
\]
where $\Phi_{Y(\chi, \eta)}$ is the vector space of $\tau$-invariant linear functionals on $U(\mf{l}_\eta)$ vanishing on $\Ann_{U(\mf{l}_\eta)}w$, as in Notation \ref{Phi}. The $\mf{g}$-module $\Ind_{\mf{l}_\eta}^\g Y(\chi, \eta)$ is cyclically generated by the vector $\overline{w}=1 \otimes w$, so again by Proposition \ref{forms-functionals}, the vector space of contravariant forms on $\Ind_{\mf{l}_\eta}^\g Y(\chi, \eta)$ is isomorphic to the vector space of linear functionals on $U(\g)$ vanishing on $\Ann_{U(\g)}\overline{w}$:
\[
\Psi_{\Ind_{\mf{l}_\eta}^\g Y(\chi, \eta)}\simeq \Phi_{\Ind_{\mf{l}_\eta}^\g Y(\chi, \eta)}. 
\]
Hence the result follows from the following proposition. 
\begin{proposition}
\label{reduction to nondegenerate}
The restriction map  $\varphi\mapsto \varphi|_{U(\mf{l}_\eta)}$ induces an isomorphism  
\[
\res_{U(\mf{l}_\eta)}:\Phi_{\Ind_{\mf{l}_\eta}^\g Y(\chi, \eta)}\xrightarrow{\sim} \Phi_{Y(\chi, \eta)}.
\]
\end{proposition}
\begin{proof}
Let $\varphi\in\Phi_{\Ind_{\mf{l}_\eta}^\g Y(\chi, \eta)}$. The $U(\mf{l}_\eta)$-module $Y(\chi,\eta)$ is naturally embedded in the induced module $\Ind_{\mf{l}_\eta}^\g Y(\chi, \eta)$, and this embedding maps $w \in Y(\chi, \eta)$ to $\overline{w}\in \Ind_{\mf{l}_\eta}^\g Y(\chi, \eta)$. Hence $\text{Ann}_{U(\mf{l}_\eta)}w\subset \text{Ann}_{U(\g)}\overline{w}$. 

Because $\varphi$ vanishes on $\text{Ann}_{U(\g)}\overline{w}$, we have that $\varphi\left(\text{Ann}_{U(\mf{l}_\eta)}w\right)=0$.
Moreover, by the $\tau$-invariance of $\varphi$, we have 
\[
\varphi(u)=\varphi(\tau(u))\text{ for all }u\in U(\mf{l}_\eta)\subset U(\g).
\]
Therefore, the image of $\res_{U(\mf{l}_\eta)}$ is contained in $\Phi_{Y(\chi, \eta)}$ as claimed. We complete the proof by constructing an inverse to $\res_{U(\mf{l}_\eta)}$.

Using the Poincar\'{e}--Birkhoff--Witt theorem, we can decompose $U(\g)$ as 
\begin{equation}
    \label{Ul decomp}
U(\g)=U(\mf{l}_\eta)\oplus(\overline{\n}^\eta U(\g)+U(\g)\n^\eta).
\end{equation}
Let $\pi_\eta$ denote the corresponding projection map from $U(\g)$ to $U(\mf{l}_\eta)$. Note that
\begin{align}\label{trans-stable-proj}
    \pi_\eta(\tau(u))=\tau(\pi_\eta(u)),
\end{align}
for any $u \in U(\g)$. 

The remainder of the proof will be devoted to showing that the map 
\begin{align*}
    \mathrm{ext}_{\pi_\eta}: \Phi_{Y(\chi, \eta)} &\rightarrow \Phi_{\Ind_{\mf{l}_\eta}^\g Y(\chi, \eta)} \\
    \phi &\mapsto \phi \circ \pi_\eta.
\end{align*}
is well-defined and is the inverse of $\res_{U(\lev_\eta)}$.
 The $\tau$-invariance of $\phi \circ \pi_\eta$ follows immediately from the $\tau$-invariance of $\phi$ and equation (\ref{trans-stable-proj}). To show that $\phi\circ\pi_\eta(\Ann_{U(\g)}\overline{w})=0$, and therefore $\mathrm{ext}_{\pi_\eta}\phi\in \Phi_{\Ind_{\mf{l}_\eta}^\g Y(\chi, \eta)}$, we will establish the following equality: 
\begin{equation}
\label{no overlap}
    U(\mf{l}_\eta) \overline{w} \cap \overline{\mf{n}}^\eta U(\g) \overline{w}= 0. 
\end{equation}

To prove equation (\ref{no overlap}), we recall some facts about the structure of the modules $\Ind_{\mf{l}_\eta}^\g Y(\chi, \eta)$ which were established in \cite{McDowell}. Let $\z = \rad \lev_\eta$, and $\omega\in \z^\ast$ be the restriction of $\chi:Z(\lev_\eta) \rightarrow \C$ to $\z\subset Z(\lev_\eta)$. By \cite[Prop. 2.4]{McDowell}, $\z$ acts semisimply on $\Ind_{\mf{l}_\eta}^\g Y(\chi, \eta)$, and there is a partial order on the $\mf{z}$-weights which index the irreducible factors in this decomposition. The unique maximal nonzero $\z$-weight space of $\Ind_{\mf{l}_\eta}^\g Y(\chi, \eta)$ with respect to this partial order has weight $\omega$, and is equal to the $U(\lev_\eta)$-span of $\overline{w}$:
\[
\left(\Ind_{\mf{l}_\eta}^\g Y(\chi, \eta)\right)_\omega = U(\lev_\eta)\overline{w}.
\] 
Because $\overline{w}$ generates $\Ind_{\mf{l}_\eta}^\g Y(\chi, \eta)$ as a $U(\g)$-module,
\[
\bar{\n}^\eta U(\g)\overline{w} = \bar{\n}^\eta\left( \Ind_{\mf{l}_\eta}^\g Y(\chi, \eta)\right).
\] 
By \cite[Prop. 1.8(c)]{McDowell},
\[
\bar{\n}^\eta\left( \Ind_{\mf{l}_\eta}^\g Y(\chi, \eta)\right) \subset \bigoplus_{\mu<\omega}\left( \Ind_{\mf{l}_\eta}^\g Y(\chi, \eta)\right)_\mu.
\] 
Therefore, $\left( \bar{\n}^\eta U(\g)\overline{w}  \right)_\omega = 0$, and $ U(\mf{l}_\eta)\overline{w}\cap (\overline{\n}^\eta U(\g))\overline{w}=0$. This establishes equation (\ref{no overlap}), which we now use to show that $\phi\circ\pi_\eta(\Ann_{U(\g)}\overline{w})=0$.

Let $a \in \Ann_{U(\g)}\overline{w}$ and use (\ref{Ul decomp}) to write $a = \pi_\eta(a) + n + m$ for $n \in \overline{\mf{n}}^\eta U(\g)$, $m \in U(\g)\mf{n}^\eta$. Then 
\[
a\overline{w}=(\pi_\eta(a) + n + m) \overline{w} = \pi_\eta(a)\overline{w} + n \overline{w} =0, 
\]
because $m \in \Ann_{U(\g)} \overline{w}$. Hence by (\ref{no overlap})
\[
\pi_\eta(a) \overline{w} = -n \overline{w}=0,
\]
and $\pi_\eta(a) \in \Ann_{U(\mf{l}_\eta)}\overline{w} = \Ann_{U(\mf{l}_\eta)}w$. It follows that $\phi\circ\pi_\eta(\Ann_{U(\g)}\overline{w})=0$, and therefore $\mathrm{ext}_{\pi_\eta}$ is well-defined. 
It is then clear that $\mathrm{ext}_{\pi_\eta}$ and $\res_{U(\lev_\eta)}$ are inverse maps, which concludes the proof.

\end{proof}

Theorem \ref{second result} now follows immediately from Proposition \ref{reduction to nondegenerate}.
\end{proof}

\begin{corollary}
\label{verma form}
Let $\lambda \in \mf{h}^*$ and let $M(\lambda) = U(\g) \otimes_{U(\mf{b})}\C_\lambda$ be the corresponding Verma module. There exists a unique contravariant form on $M(\lambda)$ up to scaling. 
\end{corollary}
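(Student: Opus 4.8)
The plan is to realize $M(\lambda)$ as a parabolically induced module in the sense of Definition \ref{induction functor} and then invoke Theorem \ref{second result}. Specializing the constructions of Section \ref{induction of contravariant forms} to the zero morphism $\eta=0:\n\rightarrow\C$, we have $\Pi_\eta=\emptyset$, hence $\Sigma_\eta=\emptyset$, so that $\mf{l}_\eta=\h$, $\n_\eta=0$, $\n^\eta=\n$, and $\p_\eta=\h\oplus\n=\mf{b}$. Under these identifications the induction functor $\Ind_{\mf{l}_\eta}^\g$ of Definition \ref{induction functor} sends a $U(\h)$-module $V$ to $U(\g)\otimes_{U(\mf{b})}V$, so that $M(\lambda)=U(\g)\otimes_{U(\mf{b})}\C_\lambda=\Ind_{\h}^\g\C_\lambda$ is exactly the parabolic induction of the one-dimensional $\h$-module $\C_\lambda$.

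Next I would check that $\C_\lambda$ is an irreducible object of $\mc{N}(\h)_0$ so that Theorem \ref{second result} applies. Since $\h$ is abelian, $\C_\lambda$ is visibly irreducible (being one-dimensional), finitely generated, and $Z(\h)$-finite; the local $U(\n_\eta)$-finiteness condition is vacuous because $\n_\eta=0$. Theorem \ref{second result} then yields a vector space isomorphism $\Psi_{\C_\lambda}\simeq\Psi_{\Ind_\h^\g\C_\lambda}=\Psi_{M(\lambda)}$, reducing the problem to counting contravariant forms on the $\h$-module $\C_\lambda$.

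The remaining step is to show $\dim\Psi_{\C_\lambda}=1$. Because $\C_\lambda$ is one-dimensional, a symmetric bilinear form on it is determined by a single scalar $\langle v,v\rangle$ for a generator $v$, so $\dim\Psi_{\C_\lambda}\leq 1$. The contravariance condition (Definition \ref{contravariant form}) restricted to $U(\h)$ reads $\langle hv,v\rangle=\langle v,\tau(h)v\rangle$; since $\tau$ fixes $\h$ pointwise and $h$ acts by the scalar $\lambda(h)$, this identity is automatically satisfied, so every scalar choice defines a genuine contravariant form and $\dim\Psi_{\C_\lambda}=1$. Alternatively this equals $|W_\eta|=1$, the order of the Weyl group of the reductive Levi $\mf{l}_\eta=\h$, matching the dimension count recorded in the introduction. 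Combining with the isomorphism above gives $\dim\Psi_{M(\lambda)}=1$, which is precisely the existence and uniqueness up to scaling of the Shapovalov form.

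I do not anticipate a genuine obstacle, since all the representation-theoretic content is carried by Theorem \ref{second result}; the rest is bookkeeping. The only points requiring care are matching the degenerate data $\eta=0$ with the Borel $\mf{b}$ and Levi $\h$, and verifying that $\C_\lambda$ genuinely satisfies the hypotheses of Theorem \ref{second result}, in particular that it is an irreducible object of $\mc{N}(\h)_0$.
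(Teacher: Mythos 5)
Your proof is correct and takes essentially the same approach as the paper: set $\eta=0$, so that $\mf{l}_\eta=\h$ and $\p_\eta=\mf{b}$, identify $M(\lambda)=\Ind_{\h}^{\g}\C_\lambda$, and apply Theorem \ref{second result} to the one-dimensional module $\C_\lambda$. The paper's proof leaves implicit the verification that $\C_\lambda$ is an irreducible object of $\mc{N}(\h)_0$ and that $\dim\Psi_{\C_\lambda}=1$ (automatic since $\tau$ fixes $U(\h)$ pointwise); your write-up simply makes this bookkeeping explicit.
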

\begin{proof}
Let $\eta:\mf{n}\rightarrow \C$ be the Lie algebra morphism sending $x \mapsto 0$ for all $x \in \mf{n}$. Then $\mf{l}_\eta=\mf{h}$, and $\mc{N}(\mf{h})_\eta$ is the category of finite-dimensional $U(\h)$-modules. Applying Theorem \ref{second result} to the one-dimensional $U(\h)$-module $\C_\lambda$ proves the corollary. 
\end{proof}
\begin{remark}
Shapovalov defined a contravariant form on a Verma module $M(\lambda)$ by the recipe
\[
\langle u\cdot 1 \otimes 1, v \cdot 1 \otimes 1 \rangle := \lambda (p_0(\tau(v) u)),
\]
where $u,v \in U(\g)$, $1 \otimes 1 \in M(\lambda)$ is the generating highest weight vector, and $p_0:U(\g)=U(\h)\oplus(\overline{\n}U(\g) + U(\g) \n)\rightarrow U(\h)$ is the Harish-Chandra projection (Definition \ref{twisted HC proj}, Remark \ref{HC hom}). Shapovalov showed that this is the unique contravariant form on $M(\lambda)$ with the property that $\langle 1 \otimes 1, 1 \otimes 1 \rangle=1$ \cite{Shapovalov}. Corollary \ref{verma form} implies the existence of the Shapovalov form. 
\end{remark}
In \cite{McDowell}, McDowell defines a class of $U(\g)$-modules which include both the Verma modules $M(\lambda)$ and the Whittaker modules $Y(\chi, \eta)$ from Section \ref{classification of contravariant forms}. These modules are constructed by applying the parabolic induction functor of Definition \ref{induction functor} to irreducible modules in $\mc{N}(\mf{l}_\eta)$ as $\eta$ varies. Specifically, for a Lie algebra morphism $\eta:\mf{n}\rightarrow \C$ and an infinitesimal character $\chi:Z(\mf{l}_\eta)\rightarrow \C$, McDowell defines a $U(\g)$-module 
\begin{equation}
    \label{McDowell}
M(\chi, \eta):=\Ind_{\mf{l}_\eta}^\g Y(\chi, \eta).
\end{equation}
\begin{remark}
There is an alternate definition of $M(\chi, \eta)$ in terms of generators and relations. The $U(\g)$-module $M(\chi, \eta)$ is the $U(\g)$-module generated by $w$ subject to the relations
\begin{enumerate}
    \item $x \cdot w = \eta(x)w$ for all $x \in \n$, and 
    \item $z \cdot w = \chi(z)w$ for all $z \in Z(\mf{l}_\eta)$.
\end{enumerate}
\end{remark}
An immediate consequence of Theorem \ref{second reduction} is the following. 
\begin{corollary}
The vector space of contravariant forms on $M(\chi, \eta)$ has dimension $|W_\eta|$. 
\end{corollary}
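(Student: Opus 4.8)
The plan is to obtain the corollary by combining the two main theorems of the paper, after upgrading Theorem \ref{main theorem} from the semisimple to the reductive setting. First I would unwind the definition: by (\ref{McDowell}) we have $M(\chi,\eta)=\Ind_{\mf{l}_\eta}^\g Y(\chi,\eta)$, where $Y(\chi,\eta)$ is the Whittaker module for the \emph{reductive} Levi $\mf{l}_\eta$ associated to the restriction $\eta|_{\mf{n}_\eta}$, which is nondegenerate on $\mf{n}_\eta$ by construction. Since this $Y(\chi,\eta)$ is an irreducible object of $\mc{N}(\mf{l}_\eta)_\eta$, Theorem \ref{second result} applies and gives a vector space isomorphism $\Psi_{M(\chi,\eta)}\simeq\Psi_{Y(\chi,\eta)}$. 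The problem therefore reduces to computing $\dim\Psi_{Y(\chi,\eta)}$ for $Y(\chi,\eta)$ viewed as an $\mf{l}_\eta$-module, and the assertion becomes $\dim\Psi_{Y(\chi,\eta)}=|W_\eta|$.

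To establish this I would run the entire argument of Section \ref{classification of contravariant forms} with $\g$ replaced by $\mf{l}_\eta$ and $W$ by its Weyl group $W_\eta$. Almost everything transfers without change: Propositions \ref{forms-functionals}, \ref{annihilator} and \ref{first Ug decomp}, Lemmas \ref{reduction to Q}, \ref{commutant}, \ref{key lemma} and \ref{complement decomposition}, and the twisted Harish-Chandra isomorphism $t_\rho\circ p_0\colon Z(\mf{l}_\eta)\xrightarrow{\sim}S(\h)^{W_\eta}$, depend only on the PBW theorem, Kostant's description of the annihilator (valid for reductive $\mf{l}_\eta$ as well), and Chevalley's theorem, none of which requires semisimplicity.

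The step that genuinely needs care — and the main obstacle — is the final dimension count, since at first glance the center $\z=\rad\,\mf{l}_\eta$ threatens to contribute an infinite-dimensional factor. I would handle this by splitting the Cartan as $\h=\z\oplus\h'$, with $\h'$ the Cartan of the semisimple part $[\mf{l}_\eta,\mf{l}_\eta]$; then $W_\eta$ fixes $\z$ pointwise and acts as the Weyl group on $\h'$, so $S(\h)^{W_\eta}=S(\z)\otimes S(\h')^{W_\eta}$. Hence the ideal $S=\langle S(\h)^{W_\eta}_+\rangle$ already contains all of $\z$, and the coinvariant space $C=S(\h)/S$ is isomorphic to the classical coinvariant algebra $S(\h')/\langle S(\h')^{W_\eta}_+\rangle$, of dimension exactly $|W_\eta|$. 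The role of fixing the infinitesimal character $\chi$ is precisely to collapse the $S(\z)$ direction to a point; this is the point at which I would double-check that Lemmas \ref{key lemma} and \ref{complement decomposition} still deliver $C$, and not some larger space, as a complement to $p_\eta(U(\mf{l}_\eta)\ker\chi)$ in $U(\h)$.

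Granting this, the reductive form of Theorem \ref{main theorem} gives $\dim\Psi_{Y(\chi,\eta)}=\dim C=|W_\eta|$, and the isomorphism $\Psi_{M(\chi,\eta)}\simeq\Psi_{Y(\chi,\eta)}$ from Theorem \ref{second result} then yields the stated value $|W_\eta|$.
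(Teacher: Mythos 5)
Your proposal is correct and follows the same route as the paper, which obtains the corollary by combining Theorem \ref{second result} (giving $\Psi_{M(\chi,\eta)}\simeq\Psi_{Y(\chi,\eta)}$ for the $\mf{l}_\eta$-module $Y(\chi,\eta)$) with Theorem \ref{main theorem} applied to the Levi factor $\mf{l}_\eta$. The one point where you go beyond the paper---checking that Theorem \ref{main theorem} survives the passage from semisimple $\g$ to reductive $\mf{l}_\eta$, namely that the ideal $\langle S(\h)^{W_\eta}_+\rangle$ absorbs $\z=\rad\mf{l}_\eta$ so that the coinvariant space still has dimension exactly $|W_\eta|$---is precisely the detail the paper leaves implicit in calling the corollary an ``immediate consequence,'' and your resolution of it is correct.
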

This corollary generalizes Shapovalov's results to McDowell's modules $M(\chi, \eta)$.

\section{Example}
\label{example}
In this section, we illustrate the arguments of Section \ref{classification of contravariant forms} more concretely by showing that the vector space of contravariant forms on a nondegenerate $\mf{sl}(2,\C)$-Whittaker module $Y(\chi, \eta)$ is isomorphic to the vector space $\C^2$. For the remainder of this section, set $\mf{g}=\mf{sl}(2,\C)$. Let 
\[
y=\bp 0 & 0 \\ 1 & 0 \ep,\hspace{2mm}  h=\bp 1 & 0 \\ 0 & -1 \ep, \hspace{2mm} x = \bp 0 & 1 \\ 0 & 0 \ep 
\]
be the standard basis of $\mf{g}$, and $\Omega = \frac{1}{2}h^2+h+2yx \in Z(\mf{g})$ the Casimir element, which generates $Z(\mf{g})$. The antiautomorphism $\tau:U(\mf{g})\rightarrow U(\mf{g})$ maps $\tau(x)=y, \tau(h)=h,$ and $\tau(y)=x$.

Let $\eta \in \ch{\mf{n}}$ be the Lie algebra morphism sending $x \mapsto 1$. Since $\Pi=\Sigma^+=\{\alpha\}$ consists of a single root and $\g_\alpha = \C x$, this choice of $\eta$ is nondegenerate. Let  $\chi:Z(\g)\rightarrow \C$ be the algebra homomorphism sending $\Omega \mapsto 0$. Let 
\[
Y(\chi, \eta)=U(\g)\otimes_{Z(\g)\otimes U(\n)} \C_{\chi, \eta},
\]
as in Section \ref{preliminaries and notation}. $Y(\chi,\eta)$ is an irreducible $\g$-module generated by the Whittaker vector $w=1 \otimes 1$.  

Let $\Phi_{Y(\chi, \eta)}$ be the vector space of $\tau$-invariant linear functionals on $U(\g)$ which vanish on $\Ann_{U(\g)}w$. By Proposition \ref{forms-functionals}, $\Phi_{Y(\chi, \eta)}$ is isomorphic to the vector space of contravariant forms on $Y(\chi, \eta)$. Define a map $\psi:\C^2 \rightarrow \Phi_{Y(\chi, \eta)}$ sending $(c_0, c_1) \mapsto \varphi_{c_0, c_1}=:\varphi$ as follows.
\begin{itemize}
    \item On the Poincar\'{e}--Birkhoff--Witt basis element $y^rh^sx^t\in U(\g)$, $r,s,t \in \Z_{\geq 0}$
    \[
    \varphi(y^rh^sx^t)=\eta(x^{r+t})\varphi(h^s)=\varphi(h^s).
    \]
     \item Define $\varphi(h^s)$ inductively: Set $\varphi(1)=c_0$, $\varphi(h)=c_1$, and for $s \geq 2$, define 
    \[
    \varphi(h^s)=-\varphi(2h^{s-1}+4h^{s-2}yx).
    \]
    This is well-defined because rewriting $h^{s-2}yx$ in terms of the Poincar\'{e}--Birkhoff--Witt basis results in a sum whose terms only include powers of $h$ which are equal to or lower than $s-2$. (For example, $h^2yx=yh^2x-4yhx+4yx$.) Hence inductively, $\varphi$ is already defined on $2h^{s-1}+4h^{s-2}yx$. 
    \item Extend linearly to define $\varphi$ on all of $U(\mf{g})$. 
\end{itemize}
It is clear from this construction that if $(c_0, c_1) \neq (d_0, d_1) \in \C^2$, then $\varphi_{c_0, c_1} \neq \varphi_{d_0, d_1}$, so $\varphi$ defines an injection 
\[
\psi: \C^2 \hookrightarrow U(\mf{g})^*. 
\]
\begin{proposition} $\psi:\C^2 \rightarrow \Phi_{Y(\chi, \eta)}$ is an isomorphism of vector spaces. 
\end{proposition}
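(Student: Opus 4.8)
The plan is to verify that $\psi$ lands in $\Phi_{Y(\chi,\eta)}$ and is surjective; injectivity into $U(\g)^\ast$ is already established above. The organizing observation is that the first bullet defining $\varphi=\varphi_{c_0,c_1}$ is exactly the statement $\varphi=\phi\circ p_\eta$, where $\phi:=\varphi|_{U(\h)}$ and $p_\eta$ is the $\eta$-twisted Harish-Chandra projection. Indeed, by (\ref{box formula}) one has $p_\eta(y^rh^sx^t)=h^s$ (here $\eta(x)=1$), so $\varphi(y^rh^sx^t)=\varphi(h^s)=\phi(p_\eta(y^rh^sx^t))$. Every functional of the form $\phi\circ p_\eta$ is automatically $\tau$-invariant and vanishes on $U(\g)\ker\eta$ --- this is precisely the content of the inverse map constructed in Lemma \ref{reduction to Q}. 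Consequently $\varphi\in\Phi_{Y(\chi,\eta)}$ as soon as $\phi$ vanishes on $p_\eta(U(\g)\ker\chi)$, and this is the only condition that must be checked.

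Next I would make $p_\eta(U(\g)\ker\chi)$ explicit. Since $Z(\g)=\C[\Omega]$ and $\chi(\Omega)=0$, we have $\ker\chi=\Omega\,\C[\Omega]$ and $U(\g)\ker\chi=U(\g)\Omega$. The identity $p_\eta(u\Omega)=p_\eta(p_\eta(u)\Omega)$, the $\mf{sl}_2$ case of the computation in the proof of Lemma \ref{key lemma}(b), reduces this to $p_\eta(U(\g)\Omega)=p_\eta(\C[h]\Omega)=\mathrm{span}\{p_\eta(h^k\Omega):k\ge 0\}$. Reordering $h^ky=y(h-2)^k$ and using $p_\eta(yh^jx)=h^j$ gives $p_\eta(h^kyx)=(h-2)^k$, hence $p_\eta(h^k\Omega)=\tfrac12 h^{k+2}+h^{k+1}+2(h-2)^k$. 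These polynomials have leading term $\tfrac12 h^{k+2}$, so $\{1,h\}\cup\{p_\eta(h^k\Omega)\}_{k\ge0}$ is a triangular basis of $\C[h]$, and therefore $U(\h)=\mathrm{span}\{1,h\}\oplus p_\eta(U(\g)\Omega)$. This is the concrete incarnation of Lemma \ref{complement decomposition}, with $C=\mathrm{span}\{1,h\}$ the space of $W$-coinvariants for $\mf{sl}_2$.

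I would then match the recursion to this vanishing condition. Since $h^{s-2}\Omega=\tfrac12 h^s+h^{s-1}+2h^{s-2}yx$, the relation $\varphi(h^s)=-\varphi(2h^{s-1}+4h^{s-2}yx)$ is, after dividing by $2$, exactly $\varphi(h^{s-2}\Omega)=\phi(p_\eta(h^{s-2}\Omega))=0$. Thus the inductive clause in the definition says precisely that $\phi$ kills every $p_\eta(h^k\Omega)$, hence all of $p_\eta(U(\g)\Omega)$; by the previous paragraph this proves $\varphi\in\Phi_{Y(\chi,\eta)}$, so $\psi$ maps $\C^2$ into $\Phi_{Y(\chi,\eta)}$. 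For surjectivity I would run the argument backwards: given any $\varphi'\in\Phi_{Y(\chi,\eta)}$, Lemma \ref{reduction to Q} writes $\varphi'=\phi'\circ p_\eta$ with $\phi'=\varphi'|_{U(\h)}$ vanishing on $p_\eta(U(\g)\Omega)$; the decomposition $U(\h)=\mathrm{span}\{1,h\}\oplus p_\eta(U(\g)\Omega)$ shows $\phi'$ is determined by $c_0=\varphi'(1)$ and $c_1=\varphi'(h)$ and satisfies the same recursion, so $\varphi'=\varphi_{c_0,c_1}$. Together with the injectivity established above, this gives the isomorphism. Alternatively, surjectivity is immediate from Theorem \ref{main theorem}, which gives $\dim\Phi_{Y(\chi,\eta)}=|W|=2$.

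The main obstacle is the explicit computation of $p_\eta(h^k\Omega)$ --- carrying out the PBW reordering of $h^kyx$ and evaluating the projection --- together with the triangularity claim that turns $\{p_\eta(h^k\Omega)\}_{k\ge0}$ into a complement of $\mathrm{span}\{1,h\}$ in $\C[h]$. Equivalently, the crux is recognizing that the somewhat opaque recursion defining $\varphi(h^s)$ is nothing but the condition $\varphi(\C[h]\Omega)=0$, i.e.\ the vanishing on $p_\eta(U(\g)\ker\chi)$ demanded by Lemma \ref{reduction to Q}.
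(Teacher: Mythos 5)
Your proof is correct, but it takes a genuinely different route from the paper's. The paper's own proof is deliberately self-contained and computational: it verifies $\tau$-invariance and vanishing on $\Ann_{U(\g)}w$ by direct calculation on Poincar\'e--Birkhoff--Witt monomials (using the recursion to kill elements of the form $y^rh^sx^t\Omega$), and it proves surjectivity by showing that any $\varphi\in\Phi_{Y(\chi,\eta)}$ is determined by $\varphi(1)$ and $\varphi(h)$ --- first ``peeling off'' $x$'s to reduce $\varphi(y^rh^sx^t)$ to $\varphi(h^s)$, then using $h^s=2h^{s-2}\Omega-2h^{s-1}-4h^{s-2}yx$ together with $\Omega\in\Ann_{U(\g)}w$. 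You instead recognize the definition of $\varphi_{c_0,c_1}$ as the inverse map $\phi\mapsto\phi\circ p_\eta$ from Lemma \ref{reduction to Q}, so that $\tau$-invariance and vanishing on $U(\g)\ker\eta$ come for free, and the only condition left to check is that $\phi$ kills $p_\eta(U(\g)\ker\chi)$; your explicit formula $p_\eta(h^k\Omega)=\tfrac12 h^{k+2}+h^{k+1}+2(h-2)^k$, the resulting triangular decomposition $U(\h)=\mathrm{span}\{1,h\}\oplus p_\eta(U(\g)\Omega)$, and the observation that the recursion is precisely the condition $\phi\left(p_\eta(h^k\Omega)\right)=0$ together constitute a concrete $\mf{sl}_2$ incarnation of Lemma \ref{complement decomposition} with $C=\mathrm{span}\{1,h\}$. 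What your approach buys: it explains \emph{why} the recursion takes the form it does, and it turns the example into an explicit specialization of the general machinery of Section \ref{classification of contravariant forms}, with generators of $p_\eta(U(\g)\ker\chi)$ in hand. What the paper's approach buys: being elementary and independent of the general results, the example doubles as a standalone consistency check of Theorem \ref{main theorem} --- a feature your alternative surjectivity argument (quoting $\dim\Phi_{Y(\chi,\eta)}=|W|=2$ from Theorem \ref{main theorem}) would give up, though that shortcut is logically valid since the theorem is proved without reference to this example.
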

\begin{proof}
By construction, $\psi$ is linear and injective, so it remains to show that im$\psi=\Phi_{Y(\chi, \eta)}$. We need to show that $\varphi:=\varphi_{c_0, c_1}$ is contained in $\Phi_{Y(\chi, \eta)}$ by showing that it satisfies: (1) $\varphi(\tau(u))=\varphi(u)$, and (2) $\varphi( \Ann_{U(\g)}(w)) =0$. We begin by noting two consequences of the definition of $\varphi$.
\begin{itemize}
    \item For any $u \in U(\mf{g})$, $\varphi(ux)=\eta(x)\varphi(u)=\varphi(u)$. 
    \item For any $u \in U(\mf{g})$, $\varphi(yu)=\eta(x)\varphi(u)=\varphi(u)$.
\end{itemize}
First we check that $\varphi$ satisfies (1) on the Poincar\'{e}--Birkhoff--Witt basis element $y^rh^sx^t$:
\[
\varphi(y^rh^sx^t)=\varphi(h^s)=\varphi(y^th^sx^r)=\varphi(\tau(y^rh^sx^t)). 
\]
Extending linearly, we conclude that $\varphi$ satisfies (1) for any $u \in U(\g)$.

Next we check (2). Note that $\Ann(w)=U(\mf{g})\Omega + U(\mf{g})(x-\eta(x))$ (Proposition \ref{annihilator}), so a generic element of the annihilator is a sum of elements of the form $y^rh^sx^t\Omega + y^ih^jx^k(x-\eta(x))$. Then 
\begin{align*}
\varphi(y^rh^sx^t\Omega + y^ih^jx^k(x-\eta(x)))&=\left(\eta(x^r)\varphi(h^s\Omega x^t)\right) + \left( \varphi(y^ih^jx^{k}x)-\eta(x)\varphi(y^ih^jx^k)\right) \\
&=\eta(x^{r+t})\varphi\left(h^s\left(\frac{1}{2}h^2+h+2yx\right)\right) \\
&= \frac{1}{2}\varphi(h^{s+2})+\varphi(h^{s+1}+2h^syx)\\
&=-\frac{1}{2}\varphi(2h^{s+1}+4h^syx) + \varphi(h^{s+1}+2h^syx)\\
&=0. 
\end{align*}
By extending linearly, we see that for any $a \in \Ann(w)$, $\varphi(a)=0$. This proves that $\mathrm{im}\psi \subseteq \Phi_{Y(\chi, \eta)}$. 
To show that $\text{im}\psi=\Phi_{Y(\chi, \eta)}$, we make the following observations about any $\varphi \in \Phi_{Y(\chi, \eta)}$. 
\begin{itemize}
    \item The value of $\varphi(y^rh^sx^t)$ is completely determined by $\varphi(h^s)$. This follows from three facts: first, any Poincar\'{e}--Birkhoff--Witt basis element $y^rh^sx^t$ can be expressed as $u(x-\eta(x))+ay^rh^s$ for some $u \in U(\mf{g})$ and $a \in \C$ by ``peeling off'' $x$'s (i.e. rewriting $y^rh^sx^t=y^rh^sx^{t-1}(x-\eta(x))+\eta(x)y^rh^sx^{t-1}$); second, $\varphi$ vanishes on the annihilator of $w$; and third, $\varphi$ is $\tau$-invariant.
    \item The value of $\varphi(h^s)$ is completely determined by $\varphi(1)$ and $\varphi(h)$. This follows from the fact that we can rewrite $h^s=2h^{s-2}\Omega-2h^{s-1}-4h^{s-2}yx$, and $\Omega \in \Ann(w)$. 
\end{itemize}
Therefore, a choice of $\varphi(1)$ and $\varphi(h)$ in $\C$ completely determines a linear functional $\varphi \in \Phi_{Y(\chi, \eta)}$, so $\psi:\C^2\rightarrow \Phi_{Y(\chi, \eta)}$ is surjective. 
\end{proof}

Hence the vector space of contravariant forms on the $\mf{sl}_2(\C)$-module $Y(\chi, \eta)$ is 2-dimensional. Note that this is also the cardinality of $W=\Z/2\Z$.

\bibliographystyle{alpha}
\bibliography{contravariant-forms}

\end{document}